\newcommand{\RN}[1]{%
	\textup{\uppercase\expandafter{\romannumeral#1}}%
}
\newtheorem{Theorem}{Theorem}[section]
\newtheorem{Proposition}{Proposition}[section]
\newtheorem{Lemma}{Lemma}[section]
\theoremstyle{definition}
\newtheorem{Remark}{Remark}
\newtheorem{Assumptions}{Hypothesis}[section]
\def\N{{\mathbb{N}}}
\def\R{{\mathbb{R}}}
\def\cU{{\mathcal{U}}}
\def\t\cU{{\widetilde{{\mathcal{U}}}}}
\newcommand\norm[1]{\left\lVert#1\right\rVert}
\DeclareMathOperator{\supp}{supp}
\def\ds{\displaystyle}
\title {A degenerate operator in non divergence form}
\author{{{\sc Alessandro Camasta}\thanks{The author is a member of the  {\it Gruppo Nazionale per l'Analisi Ma\-te\-matica, la Probabilit\`a e le loro Applicazioni (GNAMPA)} of the Istituto Nazionale di Alta Matematica (INdAM) and a member of UMI  ``Modellistica Socio-Epidemiologica (MSE)''. He is partially supported by PRIN 2017-2019 {\it Qualitative and quantitative aspects of nonlinear PDEs.}}}\\
Department of Mathematics\\ University of Bari Aldo Moro\\
Via
E. Orabona 4\\ 70125 Bari - Italy\\ e-mail: alessandro.camasta@uniba.it\\
{\sc Genni Fragnelli}\thanks{The author is a member of the  {\it Gruppo Nazionale per l'Analisi Ma\-te\-matica, la Probabilit\`a e le loro Applicazioni (GNAMPA)} of the Istituto Nazionale di Alta Matematica (INdAM) and a member of {\it UMI ``Modellistica Socio-Epidemiologica (MSE)''}. She is partially supported by the FFABR {\it Fondo per il finanziamento delle attivit\`a base di ricerca} 2017 and by PRIN 2017-2019 {\it Qualitative and quantitative aspects of nonlinear PDEs.}}\\
Department of Ecology and Biology\\ Tuscia University\\
Largo dell'Universit\`a, 01100 Viterbo - Italy\\ e-mail: genni.fragnelli@unitus.it}
\date{}
\begin{document}

\maketitle
\centerline{\emph{In honour of Francesco Altomare,}}
\centerline{\emph{with deep affection on his 70th birthday}}

\vspace{1cm}

\begin{abstract}
	In this paper we consider a fourth order operator in non divergence form $Au:= au''''$, where $a: [0,1] \rightarrow \R_+$ is a function that degenerates somewhere in the interval. We prove that the operator generates an analytic semigroup, under suitable assumptions on the function $a$. We extend these results to a general operator  $A_nu := au^{(2n)}$.
\end{abstract}

\noindent Keywords: Degenerate operators in non divergence form, linear differential ope\-ra\-tors of order 2$n$, interior and boundary degeneracy, analytic semigroups.

\noindent 2000AMS Subject Classification: Primary: 47D06, 35K65; Secondary: 47B25, 47N20.

\section{Introduction}
In this paper we analyze the properties of a degenerate fourth order  differential operator in non divergence form under Dirichlet boundary conditions in the real setting. More precisely, we consider the operator $Au:= au''''$ with a suitable domain, where we denote with $'$ the derivative of a function depending only on one variable $x$, which we assume to vary in $[0,1]$. The coefficient $a$ is a function for which the degeneracy may occur in the interior of the interval or on the boundary of it. 

In fact, we shall admit two types of degeneracy for $a$, namely weak and strong degeneracy. In particular, following \cite{acf}, \cite{3} or \cite{fmAnona2013}, we say that a function $g:[0,1]\to\mathbb{R}$ is 
\begin{itemize}
	\item {\it weakly degenerate at $x_0 \in [0, 1]$} if $g \in \mathcal{C}[0, 1]$, $g(x_0) = 0$, $g > 0$ on $[0, 1]\setminus\{x_0\}$ and $\frac{1}{g}\in L^1(0,1)$;
	\item {\it strongly degenerate at $x_0 \in [0, 1]$} if $g \in \mathcal{C}[0, 1]$, $g(x_0) = 0$, $g > 0$ on $[0, 1]\setminus \{x_0\}$ and $\frac{1}{g}\not\in L^1(0,1)$.
\end{itemize}

We are interested in this type of operators since many problems that are relevant for applications are described by fourth order operators. Among these applications we can find dealloying (corrosion processes, see, e.g., \cite{eakds}), population dynamics (see, e.g., \cite{cm}), bacterial films (see, e.g., \cite{kd}), thin film  (see, e.g., \cite{odb}), Chemistry  (see, e.g., \cite{vbd}), tumor growth  (see, e.g., \cite{akw}, \cite{ks}), image processing (denoising, inpainting, see, e.g., \cite{be}, \cite{c}, \cite{dv}), Astronomy (rings of Saturn, see, e.g., \cite{t}), Ecology (surprisingly, the clustering of mussels can be perfectly well described by the Cahn–Hilliard equation, see, e.g, \cite{l}) and so on.

Let us present very  briefly some interesting results about the existence and uniqueness of solutions for problems associated to the operators under consideration. In \cite{ksw} the authors  study the  epitaxial growth of nanoscale thin films which can be described by a parabolic equation of the form
\[
\frac{\partial u}{\partial t} +\Delta^2 u - \nabla\cdot(f(\nabla u)) =g
\]
in $(0,T)\times (0,L)$, where $f$ and $g$ belong to $\mathcal{C}^1(\R^N, \R^N)$, $N\ge2$, and $L^2((0,T)\times (0,L))$, respectively. The authors show existence, uniqueness and regularity of solutions in suitable functional spaces. In \cite{dgg} the authors consider a degenerate fourth order operator of the form $\nabla \cdot(m(u)\nabla \Delta u)$ where $m$ is a specific function, proving existence and non uniqueness results for the parabolic equation associated to this operator (see also \cite{bbdal}, \cite{bf}, \cite{bp}, \cite{eg}, \cite{gr} or \cite{ll}). In \cite{gg} the existence of a weak  solution for the following equation is proved
\[
\frac{\partial u}{\partial t}+ \nabla \cdot(|\nabla \Delta u|^{p(x)-2} \nabla \Delta u)=f(x,u),
\]
in $(0,T)\times\Omega$, under the conditions
$u=\Delta u=0$ on $\partial \Omega$ and 
$u(0,x)=u_0(x)$, $x\in \Omega \subset \R^N$, $N\ge 2$. Here $p$ and $f$ are specific functions and $u_0$ is an initial datum. Observe that for $p\equiv2$ the parabolic problem associated to the previous operator becomes the classical Cahn–Hilliard problem, which has been extensively studied (see, e.g., \cite{lz}). The previous model can describe some properties of medical magnetic resonance images in space and time. In particular, if $f(x,u):= u(t,x)-a(x)$, then $u$ represents a digital image and $a$ its observation.

Recently, in \cite{fgmr} the general operator $\tilde A_nu:= (au^{(n)})^{(n)}$ is considered, where $a\in \mathcal{C}[0,1]$ degenerates  in an interior point $x_0$. The authors give sufficient conditions on the function $a$ so that the operator $(\tilde A_n, D(\tilde A_n))$ generates a contractive analytic semigroup on $L^2(0,1)$.

As for the second order operator (see, e.g., \cite{cfr}, \cite{2} or \cite{4} and the references therein), the generation property for the operator in non divergence form cannot be deduced by the one of the operator in divergence form without assuming other assumptions on the function $a$. Moreover, another difference between the operator in divergence form and the one in non divergence form is the fact that the natural space to study the equation associated to  $\tilde Au:= (au'')''$, or $\tilde A_nu:= (au^{(n)})^{(n)}$,  is $L^2(0,1)$ whereas the problem associated to $Au:= au''''$, or more in general to $A_nu:= au^{(2n)}$, is more conveniently set in the weighted space
\[
L^2_{\frac{1}{a}}(0,1):=\left \{ u\in L^2(0,1): \int_0^1\frac{u^2(x)}{a(x)}\,dx \in \R\right\}.
\]
In such a space we will prove that the fourth order operator $A$ and, in general, the operator $A_n$ generate an analytic semigroup.

The paper is organized in the following way. In Section \ref{SECTION 2} we assume that the degeneracy point belongs to the boundary of the space domain and  we consider the fourth order operator, proving some preliminary results that will be crucial to prove the generation property of it in Theorem \ref{Theorem 2.1}. In Section \ref{Section 3} we characterize the domain of the operator in the weakly and in the strongly degenerate case under additional assumptions on the degenerate function $a$. Thanks to the characterization of the domain we prove again the generation property, if the degeneracy point is in the interior of the domain.  In Section \ref{Section4} we extend the previous results to the general operator $A_nu= au^{(2n)}$, $n \ge 3$.

A final comment on the notation: by  $C$ we shall denote universal positive
constants, which are allowed to vary from line to line. 

This paper is a tribute to Professor Francesco Altomare for celebrating his 70th birthday and for thanking him for the wonderful teaching and research activities realized with great efficiency, accuracy and passion.
\section{The fourth order operator if the degeneracy point belongs to the boundary}\label{SECTION 2}
In this section we introduce the operator $Au:=au''''$, where $a:[0,1]\to\mathbb{R}_+$ is a given function that degenerates somewhere in the space domain, and we consider the following (weighted) Hilbert spaces:
\begin{equation*}
	L^2_{\frac{1}{a}}(0, 1):=\biggl \{u\in L^2(0, 1):\int_{0}^{1}\frac{u^2}{a}\,dx<+\infty \biggr \}
\end{equation*}
and
\begin{equation*}
	H^i_{\frac{1}{a}}(0, 1):= L^2_{\frac{1}{a}}(0, 1)\cap H^i_0(0, 1), \quad i=1,2,
\end{equation*}
with the norms
\begin{equation*}
	\norm{u}^2_{L^2_{\frac{1}{a}}(0, 1)}:= \int_{0}^{1}\frac{u^2}{a}\,dx\quad\,\,\,\,\,\,\, \forall \; u \in L^2_{\frac{1}{a}}(0, 1)
\end{equation*}
and
\begin{equation*}
	\norm{u}_{H^i_{\frac{1}{a}}(0, 1)}^2:=\norm{u}^2_{L^2_{\frac{1}{a}}(0, 1)} + \sum_{k=1}^{i}\Vert u^{(k)}\Vert^2_{L^2(0, 1)}\quad\,\,\,\,\,\,\, \forall \; u \in H^i_{\frac{1}{a}}(0, 1),
\end{equation*}
$i=1,2$, respectively.
We recall that $H^i_0(0, 1):= \{u\in H^i(0, 1): u^{(k)}(j)=0,\, j=0,1,\, k=0,...,i-1\}$, with $u^{(0)}=u$ and $i=1,2$.

Observe that for all $u \in H^i_{\frac{1}{a}}(0, 1)$, using the fact that $u^{(k)}(j)=0$ for all $k=0,...,i-1$ and $j=0,1$, it is easy to prove that $\|u\|_{H^i_{\frac{1}{a}}(0, 1)}^2$ is equivalent to the following one 
\[
\| u \|_i ^2:= \|u\|_{L^2_{\frac{1}{a}}(0, 1)}^2+ \|u^{(i)}\|_{L^2(0,1)}^2.
\]
Thus, for simplicity, in the rest of the paper we will use $\|\cdot\|_i$ in place of $\|\cdot\|_{H^i_{\frac{1}{a}}(0, 1)}$.

Using the previous spaces, it is possible to define the operator $A$ by 
\begin{equation}\label{operator}
	Au:=au'''' \quad \text{for all } u \in
	D(A):=\left\{u\in H^2_{\frac{1}{a}}(0, 1): au''''\in L^2_{\frac{1}{a}}(0, 1) \right\},
\end{equation}
if $x_0 \in \{0,1\}$. The case $x_0 \in (0,1)$ will be considered in the next section.
In order to prove that $(A,D(A))$ generates a semigroup, we assume that  $a$ satisfies the following hypothesis:
\begin{Assumptions}\label{ipo1}
	The function $a$ belongs to the space of continuous functions $\mathcal C[0,1]$  and there exists a point $x_0\in \{0,1\}$ such that  $a(x_0)=0$ and $a>0$ on
	$[0,1]\setminus \{x_0\}$.
\end{Assumptions}

\begin{Proposition}[Green's Formula]\label{Proposition Green's Formula}
	
	Assume Hypothesis \ref{ipo1}. For all $(u,v)\in D(A)\times H^2_{\frac{1}{a}}(0, 1)$ one has
	\begin{equation}\label{GF1}
		\int_{0}^{1}u''''v\,dx=\int_{0}^{1}u''v''dx.
	\end{equation}
\end{Proposition}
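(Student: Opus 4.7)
My plan is to perform two integrations by parts on the subintervals of $(0,1)$ on which $a$ is uniformly positive, and then to send the cutoffs toward $x_0$. For concreteness I assume $x_0\in(0,1)$; the boundary cases $x_0\in\{0,1\}$ are one-sided versions of the same argument.

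First I would check that both sides of \eqref{GF1} are well-defined real numbers. The condition $au''''\in L^2_{\frac{1}{a}}(0,1)$ is equivalent to $\sqrt a\,u''''\in L^2(0,1)$, so Cauchy--Schwarz gives
\[
\Big|\int_0^1 u''''v\,dx\Big|\le\|\sqrt a\,u''''\|_{L^2(0,1)}\,\|v\|_{L^2_{\frac{1}{a}}(0,1)}<\infty,
\]
and $\int_0^1 u''v''\,dx$ is finite since $u'',v''\in L^2(0,1)$.

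For $\delta>0$ small, set $I_\delta^-:=[0,x_0-\delta]$ and $I_\delta^+:=[x_0+\delta,1]$. On each of these, $a$ has a positive infimum, so $u''''\in L^2(I_\delta^\pm)$ and hence $u''',u''$ are absolutely continuous on $I_\delta^\pm$. Two standard integrations by parts, combined with the boundary conditions $v(0)=v'(0)=v(1)=v'(1)=0$ coming from $v\in H^2_0(0,1)$, yield
\[
\int_{I_\delta^-\cup I_\delta^+}u''''v\,dx=B(\delta)+\int_{I_\delta^-\cup I_\delta^+}u''v''\,dx,
\]
with $B(\delta):=\bigl[u'''v-u''v'\bigr](x_0-\delta)-\bigl[u'''v-u''v'\bigr](x_0+\delta)$. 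As $\delta\to 0^+$, both integrals converge to their full integrals on $(0,1)$ by dominated convergence (the integrands are in $L^1$ by the step above), so $B(\delta)$ has a limit, and \eqref{GF1} reduces to showing $\lim_{\delta\to 0^+}B(\delta)=0$.

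The hard step, which I expect to be the main obstacle, is precisely this vanishing: $u'''$ and $u''$ need not possess pointwise limits at $x_0$, and $v$, $v'$ need not vanish there. The strategy I would pursue is a density argument, approximating $v$ by a sequence $v_n\in C_c^\infty((0,1)\setminus\{x_0\})$ in the norm $\|\cdot\|_2$; for each such $v_n$ the identity \eqref{GF1} is immediate by classical integration by parts (all boundary contributions disappear since $v_n$ and $v_n'$ vanish at $0,1$ and on a neighbourhood of $x_0$). The Cauchy--Schwarz bound above applied to $v-v_n$ then handles the passage to the limit on the left-hand side, while $\|v''-v_n''\|_{L^2}\to 0$ does so on the right. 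Verifying the required density is the technical core: one takes $v_n=v\phi_n$ for a smooth cut-off $\phi_n$ vanishing in a shrinking neighbourhood of $x_0$, and the commutator terms $v'\phi_n'$ and $v\phi_n''$ appearing in $v_n''$ must be controlled in $L^2$. These estimates follow by exploiting the vanishing of $v$ at $x_0$ forced by the $L^2_{\frac{1}{a}}$-integrability combined with Hypothesis \ref{ipo1}, typically through a Hardy-type inequality adapted to the weight $1/a$; this last estimate is the delicate point on which the whole argument rests.
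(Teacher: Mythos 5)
Your overall strategy coincides with the paper's: reduce \eqref{GF1} to test functions supported away from $x_0$ (where classical integration by parts applies with no boundary terms, since $u\in H^4$ on compact sets avoiding $x_0$ by Lemma \ref{derivatadistribuzionale}), and then pass to general $v$ using the bound $\bigl|\int_0^1 u''''v\,dx\bigr|\le \|\sqrt{a}\,u''''\|_{L^2(0,1)}\|v\|_{L^2_{\frac{1}{a}}(0,1)}$ together with the density of such test functions in $H^2_{\frac{1}{a}}(0,1)$. The preliminary steps (well-definedness of both sides, absolute continuity of the integral, convergence of $B(\delta)$) all match the paper.

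The gap is in the mechanism you propose for the density, which you yourself identify as the crux. Under Hypothesis \ref{ipo1} alone, membership in $L^2_{\frac{1}{a}}(0,1)$ does \emph{not} force $v(x_0)=0$: if $a$ is weakly degenerate (i.e.\ $\frac{1}{a}\in L^1(0,1)$, e.g.\ $a(x)=|x-x_0|^{1/2}$), every continuous function lies in $L^2_{\frac{1}{a}}(0,1)$, so the weight yields no information at $x_0$ and no Hardy-type inequality with weight $\frac{1}{a}$ is available. Moreover, even where $v(x_0)=0$ does hold it is insufficient: with $\phi_n''\sim n^2$ supported on an interval of length $\sim 1/n$ about $x_0$, controlling $\int(\phi_n'' v)^2$ requires $|v(x)|\lesssim |x-x_0|^{3/2}$, i.e.\ $v(x_0)=v'(x_0)=0$. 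Indeed, since $v\mapsto (v(x_0),v'(x_0))$ is continuous on $H^2(0,1)$ and $\|\cdot\|_2$-convergence controls the $H^2$-norm on $H^2_0(0,1)$, the closure of your class $\mathcal{C}^\infty_c((0,1)\setminus\{x_0\})$ is contained in $\{v:\ v(x_0)=v'(x_0)=0\}$, a proper subspace in the weakly degenerate interior case. The paper avoids the weight altogether: in the only case it writes out, $x_0\in\{0,1\}$, the cut-off acts at the endpoints and the required decay $v^2(x)\le x^3\int_0^x(v'')^2\,dz$ comes from the boundary conditions $v(j)=v'(j)=0$ built into $H^2_0(0,1)$, combined with an explicit cubic cut-off whose derivatives scale correctly (the interior case is only asserted by reference). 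So for boundary degeneracy your plan goes through once the Hardy/weight argument is replaced by the $H^2_0$ boundary conditions, but for $x_0\in(0,1)$ the density step as you justify it fails and needs a genuinely different argument.
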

The proof of the previous proposition is based on the next result which is standard, but here we give it for the reader's convenience.

\begin{Lemma}\label{derivatadistribuzionale} Let $I:= (\alpha, \beta)$, with $\alpha, \beta\in\mathbb{R}$, $\alpha < \beta$, $p\ge 1$ and $\mathscr D'(I)$ the space of distributions defined on $I$. If $f\in \mathscr D'(I)$ has n-th derivative which is a function belonging to $L^p(I)$, then $f \in W^{n,p}(I)$, where $n\in\mathbb{N}$, $n\ge 1$.
\end{Lemma}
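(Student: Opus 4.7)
The plan is to proceed by induction on $n$, with the core of the argument concentrated in the base case $n=1$; the higher-order cases then follow immediately.

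For the base case $n=1$, suppose $f \in \mathscr{D}'(I)$ with $f' = g \in L^p(I)$. Since $I$ is bounded, $g \in L^1(I)$, so one can define the antiderivative
\[
G(x):=\int_{\alpha}^{x} g(t)\,dt, \qquad x\in I.
\]
Standard absolute-continuity arguments give $G \in W^{1,p}(I)$ with $G' = g$ both classically a.e.\ and in the distributional sense. Consider the distribution $h:=f-G$. Then $h'=0$ in $\mathscr{D}'(I)$, and the plan is to deduce that $h$ is (represented by) a constant, whence $f = G + c \in W^{1,p}(I)$ since constants belong to $L^p(I)$ on a bounded interval.

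The verification that $h'=0$ implies $h$ is constant is the main technical point and is done by the classical test-function trick: fix $\varphi_0 \in \mathcal{C}^\infty_c(I)$ with $\int_I \varphi_0 = 1$. For arbitrary $\varphi \in \mathcal{C}^\infty_c(I)$, write
\[
\varphi = \Big(\int_I \varphi\Big)\varphi_0 + \psi',
\]
where $\psi(x):=\int_{\alpha}^{x}\bigl(\varphi(t)-\varphi_0(t)\int_I\varphi\bigr)\,dt$ lies in $\mathcal{C}^\infty_c(I)$ because the integrand has zero mean. Then
\[
\langle h,\varphi\rangle = \Big(\int_I\varphi\Big)\langle h,\varphi_0\rangle + \langle h,\psi'\rangle = c\int_I\varphi - \langle h',\psi\rangle = c\int_I\varphi,
\]
with $c:=\langle h,\varphi_0\rangle$. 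Hence $h=c$ as a distribution, completing the base case.

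For the inductive step, assume the lemma holds for $n-1$ and suppose $f^{(n)}\in L^p(I)$. Apply the inductive hypothesis to the distribution $f'$: its $(n-1)$-th derivative is $f^{(n)}\in L^p(I)$, so $f'\in W^{n-1,p}(I)$. In particular $f'\in L^p(I)$, and the base case then yields $f\in W^{1,p}(I)$. Combining these two facts gives $f,f',\dots,f^{(n)}\in L^p(I)$, i.e.\ $f\in W^{n,p}(I)$, as required. The only genuinely nontrivial step is the identification of a distribution with vanishing derivative as a constant; everything else is a routine packaging of the fundamental theorem of calculus with the induction.
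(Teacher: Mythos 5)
Your proof is correct and follows essentially the same route as the paper's: take an antiderivative of the given $L^p$ derivative, subtract, and use the fact that a distribution with vanishing derivative is a constant, iterating down through the orders (the paper does this with explicitly nested antiderivatives $v_1,\dots,v_n$, you package it as an induction reducing to $n=1$). The only substantive difference is that you supply the test-function argument showing $h'=0$ implies $h$ is constant, a step the paper invokes without proof; this is a welcome addition rather than a divergence.
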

\begin{proof}
	Let $x \in I$  and define 
	\[
	\begin{cases}v_1(x):= \int_\alpha^x f^{(n)}(s)ds ,\\
		v_i(x):= \int_\alpha^x v_{i-1}(s)ds,
	\end{cases}
	\]
	$i=2, ...n$. Then, for all $i =1,...n$,
	\begin{equation}\label{induzione}
		v_i \in W^{i,p}(I)\,\,\, \text{ and }\,\,\, v_i^{(i)}= f^{(n)}.
	\end{equation} 
	Indeed, for $i=1$ the thesis is obvious.
	Now, we prove it for $i=2$:
	\[
	v_{2}''= (v_2')'= (v_1)'=  f^{(n)}.
	\]
	Hence, iterating the procedure, one has that \eqref{induzione} holds.

	Now, define $\psi:= v_n-f$. Hence the distributional $n$-th derivative of $\psi$ is given by $\psi^{(n)}= v_n^{(n)}-f^{(n)}=0$. Thus, there exists a constant $c\in \R$ such that
	\[
	f^{(n-1)}= v_n^{(n-1)}+c
	\]
	a.e. in $I$; this implies that $f^{(n-1)} \in W^{1,p}(I)$. In particular, one has $f^{(n-1)} \in L^{p}(I)$. Proceeding as in the first part of the proof and iterating the procedure, one has that $f^{(i)} \in  L^{p}(I)$ for all $i=1,2,...,n-2$. 
	
	Now, define $z(x):= \int_\alpha^xf'(s)ds$ and $w(x):= z(x)-f(x)$. Clearly, $z \in W^{2,p}(I)$  and $w'(x)=0$ a.e. in $I$. This implies that there exists a constant $C\in \R$ such that
	\[
	f= z+C.
	\] 
	In particular, $f \in L^p(I)$ and the thesis follows.
\end{proof}

\begin{proof}[Proof of Proposition \ref{Proposition Green's Formula}] 
	Following the idea of \cite[Lemma 2.1]{2}, one can prove that
	the space $H^2_c(0, 1):=\{v\in H^2(0, 1): \supp v\subset (0, 1) \}$ is dense in $H^2_{\frac{1}{a}}(0, 1)$.
	
	Indeed,  we can
	consider the sequence $(v_n)_{n \ge 4}$, where $v_n: =\xi_nv$ for a
	fixed function $v\in H^2_{{\frac{1}{a}}}(0,1)$ and
	\[
	\xi_n(x):=\left\{
	\begin{array}{ll}
		0,&x\in\; \left[0,1/n\right]\cup \left[1-1/n,1\right],
		\\
		1,&x\in \;  \left[2/n, 1 -2/n \right],
		\\
		-2n^3x^3+9n^2x^2-12nx +5, &x\in\;  \left( 1/n ,2/n \right),
		\\
		f(n,x),  &x\in\;   \left(1-2/n ,1-1/n \right).
	\end{array}
	\right.
	\]
	Here $f(n,x):= a_nx^3 +b_nx^2+c_nx+d_n$, being
	\[
	a_n:= \frac{2}{-\left(1-\frac{2}{n}\right)^3+3\left(1-\frac{1}{n}\right)\left(1-\frac{2}{n}\right)^2-3\left(1-\frac{1}{n}\right)^2\left(1-\frac{2}{n}\right)+ \left(1-\frac{1}{n}\right)^3},
	\]
	\[
	b_n:= \frac{-3\left(1-\frac{2}{n}\right)-3\left(1-\frac{1}{n}\right)}{-\left(1-\frac{2}{n}\right)^3+3\left(1-\frac{1}{n}\right)\left(1-\frac{2}{n}\right)^2-3\left(1-\frac{1}{n}\right)^2\left(1-\frac{2}{n}\right)+ \left(1-\frac{1}{n}\right)^3},
	\]
	\[
	c_n:= \frac{6\left(1-\frac{1}{n}\right)\left(1-\frac{2}{n}\right)}{-\left(1-\frac{2}{n}\right)^3+3\left(1-\frac{1}{n}\right)\left(1-\frac{2}{n}\right)^2-3\left(1-\frac{1}{n}\right)^2\left(1-\frac{2}{n}\right)+ \left(1-\frac{1}{n}\right)^3}
	\]
	and
	\[
	d_n:=\frac{\left(1-\frac{1}{n}\right)^3-3\left(1-\frac{1}{n}\right)^2\left(1-\frac{2}{n}\right)}{-\left(1-\frac{2}{n}\right)^3+3\left(1-\frac{1}{n}\right)\left(1-\frac{2}{n}\right)^2-3\left(1-\frac{1}{n}\right)^2\left(1-\frac{2}{n}\right)+ \left(1-\frac{1}{n}\right)^3}.
	\]
	It is easy to see that $v_n\rightarrow v$ in
	$L^2_{{\frac{1}{a}}}(0,1)$. Indeed, setting $g_n:= v_n-v$, one has that $\lim_{n \rightarrow +\infty}g_n = 0$ a.e. and $|g_n| \le 2|v| \in L^2_{\frac{1}{a}}(0,1)$ for all $n \in \mathbb{N}$, $n\ge 4$. Hence, by the Lebesgue Theorem, one can conclude that $v_n\rightarrow v$ in
	$L^2_{{\frac{1}{a}}}(0,1)$.
	Moreover, one has that
	\begin{equation}\label{h1c}
		\begin{aligned}
			\int_0^1((v_n- v)'')^2dx\
			&\le
			2\int_0^1(1-\xi_n)^2(v'')^2dx
			+2\int_{\frac{1}{n}}^{\frac{2}{n}}(\xi_n''v)^2dx
			\\
			&+2\int_{1-\frac{2}{n}}^{1-\frac{1}{n}}(\xi_n''v)^2dx
			+8\biggl (\int_{\frac{1}{n}}^{\frac{2}{n}}(\xi_n'v')^2dx +\int_{1-\frac{2}{n}}^{1-\frac{1}{n}}(\xi_n'v')^2dx \biggr ).
		\end{aligned}
	\end{equation}
	Obviously, proceeding as before, the first term in the last member of \eqref{h1c}
	converges to zero.
	Furthermore, since $v,v'\in H^1_0(0,1)$, by H\"older's inequality one has that
	$$
	v^2(x)\le x\int_0^x (v')^2(y)dy\qquad\forall\; x\in [0,1]
	$$
	and 
	$$
	(v')^2(y)\le y\int_0^y (v'')^2(z)dz\qquad\forall\; y\in [0,1]\,.
	$$
	Hence,
	\[
	\begin{aligned}
		v^2(x)&\le x\int_0^x\left( y\int_0^y (v'')^2(z)dz\right) dy\le x^2\int_0^x\int_0^x (v'')^2(z)dz\, dy\\
		&\le x^3\int_0^x (v'')^2(z)dz.
	\end{aligned}
	\]
	Using this inequality, one can prove that there exists a positive constant $C$ such that
	\begin{equation}\nonumber
		\begin{split}
			\int_{\frac{1}{n}}^{\frac{2}{n}}(\xi_n''v)^2dx
			&\le C \int_{\frac{1}{n}}^{\frac{2}{n}}(n^6x^2+ n^4)v^2(x)dx
			\\
			&\le C \int_{\frac{1}{n}}^{\frac{2}{n}}(n^6x^2+ n^4)x^3\left( \int_0^x(v'')^2(z)dz\right)dx
			\\
			&=
			C\int_{\frac{1}{n}}^{\frac{2}{n}}(n^6x^5+n^4x^3)\int_0^x(v'')^2(z)dz\,dx\\
			&=C \int_{0}^{\frac{2}{n}}(v'')^2(z)\left(\int_{\frac{1}{n}}^{\frac{2}{n}}(n^6x^5+n^4x^3)dx \right )dz  \rightarrow
			0 \quad \text{as}\; n\rightarrow +\infty.
		\end{split}
	\end{equation}
	Analogously the term $\int_{1-\frac{2}{n}}^{1-\frac{1}{n}}(\xi_n''v)^2dx$ tends to $0$ as $n\rightarrow +\infty$.
	Since the remaining terms in \eqref{h1c} can be similarly estimated, 
	one has that
	\[
	\lim_{n \rightarrow +\infty}\int_0^1((v_n- v)'')^2dx=0
	\] 
	and our preliminary claim is proved.
	
	Now, fixed $u\in D(A)$,  set $\Phi(v):=\int_{0}^{1}u''''v\,dx-\int_0^1u''v''dx$, with $v\in H^2_{\frac{1}{a}}(0, 1)$
	$\left(\text{observe that } u''''v\in L^1(0,1) \text{ since }\ds u''''v=\sqrt{a}u''''\frac{v}{\sqrt{a}} \right)$.
	By definition of $\Phi$, it follows that
	\begin{equation*}
		\Phi(v)=0
	\end{equation*}
	for all $v\in H^2_c(0, 1)$. 
	
	In order to prove this fact,  we assume $x_0=0$, the case $x_0=1$ being treated in analogous way.
	Now, let $v\in H^2_c(0, 1)$ and let $\delta >0$ be such that $ \supp v\subset \mathcal K$, where $\mathcal K:=[\delta, 1]$ (or $\mathcal K=[0,1-\delta]$ if $x_0=1$). By definition of $D(A)$, $u''''\in L^2(\mathcal K)$, thus $u'' \in H^2(\mathcal K)$ (by Lemma \ref{derivatadistribuzionale} with $n=p=2$ and $f=u''$) and, in particular, $u \in H^4(\mathcal K)$. Hence, we can integrate by parts, obtaining
	\begin{equation}\label{1}
		\begin{split}
			\int_{0}^{1}u''''v\,dx
			&=\int_{0}^{\delta}u''''v\,dx+\int_{\delta}^{1}u''''v\,dx \\
			&=\int_{0}^{\delta}u''''v\,dx+\int_{\delta}^{1}u''v''dx,
		\end{split}
	\end{equation}
	since $v(\delta)=v'(\delta)=0$.
	Now we prove that
	\begin{equation}\label{4_1}
		\lim_{\delta\to 0}\int_{\delta}^{1}u''v''dx=\int_{0}^{1}u''v''dx
	\end{equation}
	and
	\begin{equation}\label{4}
		\lim_{\delta\to 0}\int_{0}^{\delta}u''''v\,dx=0.
	\end{equation}
	To this aim, observe that
	\[
	\int_{\delta}^{1}u''v''dx=\int_{0}^{1}u''v''dx-\int_{0}^{\delta}u''v''dx.
	\]
	Moreover, as before for $u''''v$, also $u''v''$ belongs to $L^1(0, 1)$ using the H\"older's inequality. Thus, for any $\varepsilon > 0$, by the absolute continuity of the Lebesgue integral, there exists $\delta := \delta(\varepsilon)>0$ such that
	\begin{equation*}
		\biggl | \int_{0}^{\delta}u''v''dx \biggr |\le \biggl | \int_{0}^{\delta}|u''v''|\,dx \biggr |<\varepsilon,
	\end{equation*}
	\begin{equation*}
		\biggl | \int_{0}^{\delta}u''''v\,dx \biggr |\le \biggl | \int_{0}^{\delta}|u''''v|\,dx \biggr |<\varepsilon.
	\end{equation*}
	Taking such a $\delta$ in (\ref{1}), from the arbitrariness of $\varepsilon$, we can deduce
	\begin{equation*}
		\lim_{\delta\to 0}\int_{0}^{\delta}u''''v\,dx=\lim_{\delta\to 0}\int_{0}^{\delta}u''v''dx=0.
	\end{equation*}
	Thus, by the previous equalities and by (\ref{1}), \eqref{4_1} and (\ref{4}), it follows that
	\begin{equation*}
		\int_{0}^{1}u''''v\,dx = \int_{0}^{1}u''v''dx\,\,\,\,\Longleftrightarrow\,\,\,\,\Phi(v)=\int_{0}^{1}(u''''v-u''v'')dx=0,
	\end{equation*}
	for all $v\in H^2_c(0, 1)$. Then, $\Phi$ is a bounded linear functional on $H^2_{\frac{1}{a}}(0, 1)$ such that $\Phi =0$ on $H^2_c(0, 1)$, hence $\Phi=0$ on $H^2_{\frac{1}{a}}(0, 1)$, i.e., (\ref{GF1}) holds.
\end{proof}

Actually, in the weakly degenerate case, one can proceed as done for the case $x_0\in (0,1)$ (see below) making the proof simpler.

As a consequence of Proposition \ref{Proposition Green's Formula} one has the next theorem.
\begin{Theorem}\label{Theorem 2.1}
	Assume Hypothesis \ref{ipo1}.	The operator $(A,D(A))$ defined in \eqref{operator} is self-adjoint and non negative on $L^2_{\frac{1}{a}}(0, 1)$. Hence $-A$ generates a contractive analytic semigroup of angle $\displaystyle\frac{\pi}{2}$ on $L^2_{\frac{1}{a}}(0, 1)$.
\end{Theorem}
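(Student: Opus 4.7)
The plan is to exhibit $A$ as the operator associated, via Kato's first representation theorem, to a closed, densely defined, symmetric, non-negative sesquilinear form on $H := L^2_{\frac{1}{a}}(0,1)$. With $V := H^2_{\frac{1}{a}}(0,1)$ (carrying its equivalent norm $\|\cdot\|_2$), the natural candidate is
\[
\mathfrak{a}(u,v) := \int_0^1 u''\,v''\,dx, \qquad u,v \in V.
\]
The identity $\mathfrak{a}(u,u) + \|u\|_H^2 = \|u\|_2^2$ delivers symmetry, non-negativity, continuity on $V\times V$, coercivity in the Lions sense, and closedness in one stroke. Density of $V$ in $H$ follows from the density of $H^2_c(0,1)$ in $V$ already proved inside Proposition \ref{Proposition Green's Formula}, combined with the routine density of $H^2_c(0,1)$ in $H$ obtained by truncation and mollification away from $x_0$.

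Kato's theorem then produces a unique non-negative self-adjoint operator $(T,D(T))$ on $H$ associated to $\mathfrak{a}$, namely
\[
D(T) = \Bigl\{u \in V : \exists\, f\in H,\; \mathfrak{a}(u,v) = \int_0^1 \frac{fv}{a}\,dx \text{ for all } v\in V\Bigr\}, \qquad Tu := f,
\]
and the spectral theorem immediately yields that $-T$ generates a contractive analytic semigroup of angle $\frac{\pi}{2}$ on $H$. The theorem is therefore reduced to the operator identity $T = A$, which is where Green's formula is used. For $u\in D(A)$ and $v\in V$, Proposition \ref{Proposition Green's Formula} gives
\[
\mathfrak{a}(u,v) = \int_0^1 u''v''\,dx = \int_0^1 u''''\,v\,dx = \int_0^1 \frac{(au'''')\,v}{a}\,dx,
\]
and, as $au''''\in H$ by definition of $D(A)$, this shows $u \in D(T)$ with $Tu = au'''' = Au$. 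Conversely, for $u\in D(T)$ with $Tu = f$, testing the form identity against $v\in C_c^\infty(I)$ on each compact subinterval $I\subset [0,1]\setminus\{x_0\}$ gives $u'''' = f/a$ in $\mathscr D'(I)$; Lemma \ref{derivatadistribuzionale} then promotes $u$ to $H^4(I)$ and delivers $au'''' = f$ a.e.\ on $(0,1)\setminus\{x_0\}$, hence a.e.\ on $(0,1)$, so $u\in D(A)$ with $Au = Tu$.

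The main obstacle is precisely this last identification across the degenerate point: the distributional meaning of $u''''$ is unclear in any neighborhood of $x_0$, so one must lean on the global element $f\in H$ furnished by the form representation and the negligibility of $\{x_0\}$ to glue the local identities $au''''=f$ on $(0,1)\setminus\{x_0\}$ into a single equation valid almost everywhere on $(0,1)$. Once $T = A$ is established, the self-adjointness, non-negativity, and the analytic semigroup conclusion are purely abstract.
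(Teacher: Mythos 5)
Your proposal is correct and follows essentially the same route as the paper: the paper also works with the form $\int_0^1 u''v''\,dx$ on $H^2_{\frac{1}{a}}(0,1)$, uses Proposition \ref{Proposition Green's Formula} for symmetry and non-negativity, and obtains the representation identity via Lax--Milgram followed by the local regularity of Lemma \ref{derivatadistribuzionale} away from $x_0$ --- exactly the two halves of your identification $T=A$ --- before invoking the spectral theorem for the angle $\frac{\pi}{2}$. The only difference is packaging: you cite Kato's first representation theorem, while the paper argues directly that a symmetric, non-negative operator with $I+A$ surjective is self-adjoint.
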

\begin{proof}
	Observe that $D(A)$ is dense in $L^2_{\frac{1}{a}}(0, 1)$. In order to show that $A$ is self-adjoint it is sufficient to prove that $A$ is symmetric, non negative and $(I+A)(D(A))= L^2_{\frac{1}{a}}(0,1)$. Indeed, if $A$ is non negative and $I+A$ is surjective on $D(A)$, then $A$ is maximal monotone and in this case $A$ is symmetric if and only if $A$ is self-adjoint.

	\underline{$A$ is symmetric}: thanks to Proposition \ref{Proposition Green's Formula}, for any $u,v\in D(A)$, one has
	\begin{equation*}
		\left\langle v, Au \right\rangle_{L^2_{\frac{1}{a}}(0, 1)}=\int_{0}^{1}\frac{v a u''''}{a}\,dx=\int_{0}^{1}v''''u\,dx=\left\langle Av, u \right\rangle_{L^2_{\frac{1}{a}}(0, 1)}.
	\end{equation*}
	\indent
	\underline{$A$ is non negative}: using again  Proposition \ref{Proposition Green's Formula}, for any $u\in D(A)$
	\begin{equation*}
		\left\langle Au, u \right\rangle_{L^2_{\frac{1}{a}}(0, 1)}=\int_{0}^{1}\frac{a u'''' u}{a}\,dx=\int_{0}^{1}(u'')^2dx\ge 0.
	\end{equation*}
	\indent
	\underline{$I+A$ is surjective}: observe that $H^2_{\frac{1}{a}}(0, 1)$, equipped with the inner product
	\begin{equation*}
		\left\langle u, v \right\rangle_{H^2_{\frac{1}{a}}(0, 1)} := \int_{0}^{1} \biggl (\frac{u v}{a}+u''v''\biggr ) dx\quad \,\,\,\,\,\,\,\forall\; u, v \in H^2_{\frac{1}{a}}(0, 1),
	\end{equation*} 
	is a Hilbert space. Moreover
	\begin{equation*}
		H^2_{\frac{1}{a}}(0, 1) \hookrightarrow L^2_{\frac{1}{a}}(0, 1) \hookrightarrow \left(H^2_{\frac{1}{a}}(0, 1)\right)^*,
	\end{equation*}
	where $\left(H^2_{\frac{1}{a}}(0, 1)\right)^*$ is the dual space of $H^2_{\frac{1}{a}}(0, 1)$ with respect to $L^2_{\frac{1}{a}}(0, 1)$. Indeed, the continuous embedding of $H^2_{\frac{1}{a}}(0, 1)$ in $L^2_{\frac{1}{a}}(0, 1)$ is readily seen. In addition, for any $f\in H^2_{\frac{1}{a}}(0, 1)$ and $\varphi \in L^2_{\frac{1}{a}}(0, 1)$
	\begin{equation*}
		\begin{split}
			\left|\left\langle f, \varphi \right\rangle_{L^2_{\frac{1}{a}}(0, 1)}\right| = \biggl |\int_{0}^{1}\frac{f \varphi}{a}\,dx \biggr |=\biggl |\int_{0}^{1}\frac{f}{\sqrt{a}}\frac{\varphi}{\sqrt{a}}\,dx \biggr |
			&\le \norm{f}_{L^2_{\frac{1}{a}}(0, 1)}\norm{\varphi}_{L^2_{\frac{1}{a}}(0, 1)} \\ 
			&\le \norm{f}_2 \norm{\varphi}_{L^2_{\frac{1}{a}}(0, 1)}.
		\end{split}
	\end{equation*}
	Hence, $L^2_{\frac{1}{a}}(0, 1) \hookrightarrow \left(H^2_{\frac{1}{a}}(0, 1)\right)^*$. Then, $\left(H^2_{\frac{1}{a}}(0, 1)\right)^*$ is the completion of $L^2_{\frac{1}{a}}(0, 1)$ with respect to the norm of $\left(H^2_{\frac{1}{a}}(0, 1)\right)^*$.
	Now, for $f \in L^2_{\frac{1}{a}}(0, 1)$, define the functional $F\in \left(H^2_{\frac{1}{a}}(0, 1)\right)^*$ given by 
	\begin{equation*}
		F(v):= \int_{0}^{1}\frac{f v}{a}\,dx\quad \,\,\,\,\,\,\,\forall \;v\in H^2_{\frac{1}{a}}(0, 1).
	\end{equation*}
	Consequently, by the Lax-Milgram Theorem, there exists a unique $u\in H^2_{\frac{1}{a}}(0, 1)$ such that for all $v\in H^2_{\frac{1}{a}}(0, 1)$:
	\begin{equation}\label{Consequence Riesz's Theorem}
		\left\langle u, v \right\rangle_{H^2_{\frac{1}{a}}(0, 1)}=\int_{0}^{1} \frac{f v}{a}\,dx.
	\end{equation}
	In particular, since $\mathcal{C}^{\infty}_c(0, 1) \subset H^2_{\frac{1}{a}}(0, 1)$, the integral representation (\ref{Consequence Riesz's Theorem}) holds for all $v\in \mathcal{C}^{\infty}_c(0, 1)$, i.e.,
	\begin{equation*}
		\int_{0}^{1} u''v''dx = \int_{0}^{1} \frac{f-u}{a}\,v\,dx\quad \,\,\,\,\,\,\,\forall	\;v\;\in \mathcal{C}^{\infty}_c(0, 1).
	\end{equation*}
	Thus, the distributional second derivative of $u''$ is equal to $\ds\frac{f-u}{a}$ a.e. in $(0,1).$ Since $f-u\in L^2_{\frac{1}{a}}(0, 1)$ and
	\begin{equation*}
		au''''=f -u\quad \text{a.e. in } (0,1),
	\end{equation*}
	$u \in D(A)$.
	Hence $(I+A)(u)=f$. As an immediate consequence of the Stone-von Neumann Spectral Theorem and functional calculus associated with the Spectral Theorem, one has that the operator $(A,D(A))$ generates a cosine family and an analytic semigroup of angle $\displaystyle\frac{\pi}{2}$ on $L^2_{\frac{1}{a}}(0, 1)$.
\end{proof}

The previous result can be used to prove that the one-dimensional fourth order parabolic systems associated to the operator $Au:= au''''$ are well posed. 
More precisely, fixed $T>0$, $u_0\in L^2_{\frac{1}{a}}(0,1)$ and $f\in L^2(0,T;L^2_{\frac{1}{a}}(0,1))$, the problem
\begin{equation}\label{P}
	\begin{cases}
		\ds \frac{\partial u}{\partial t}(t,x)+a(x)\frac{\partial^4u}{\partial x^4}(t,x)=f(t,x), &(t,x)\in (0,T) \times (0,1),\\
		u(t,0)=u(t,1)=0,& t\in (0,T),\\
		\ds \frac{\partial u}{\partial x}(t,0)=\frac{\partial u}{\partial x}(t,1)=0,&t\in (0,T),\\
		u(0,x)=u_0(x),&x\in(0,1),
	\end{cases}
\end{equation}
admits a unique solution $u\in \mathcal{C}([0,T];L^2_{\frac{1}{a}}(0,1))\cap L^2(0,T;H^2_{\frac{1}{a}}(0,1))$ (see also \cite{4}).  As we will see, the same result holds in the case $x_0 \in (0,1)$.
These preliminary considerations will be the starting point to study the observability and the null controllability for this kind of problems.

Observe that in the non degenerate case, i.e., if $a(x) >0$ for all $x\in [0,1]$, the previous problem is also known as Cahn–Hilliard type problem.

\section{The domain of the operator $A$ and the case $x_0 \in (0,1)$} \label{Section 3}
Assuming further hypotheses on the function $a$, we can prove some characterizations for $D(A)$ in both the weak and the strong case. More precisely, we assume the following:
\begin{Assumptions}[Weakly Degenerate Function] \label{hp 3.1}
	The function $a\in \mathcal C[0,1]$ is weakly degenerate, i.e., there exists a point $x_0\in [0,1]$ such that  $a(x_0)=0$, $a>0$ on
	$[0,1]\setminus \{x_0\}$ and $\frac{1}{a}\in L^1(0,1)$.
\end{Assumptions}
For example, as $a$, we can consider $a(x)=|x-x_0|^{\alpha}$, $0<\alpha<1$.
\begin{Assumptions}[Strongly Degenerate Function] \label{hp 3.2}
	The function $a\in \mathcal{C}[0, 1]$ is strongly degenerate, i.e., there exists a point $x_0\in [0,1]$ such that  $a(x_0)=0$, $a>0$ on
	$[0,1]\setminus \{x_0\}$ and $\frac{1}{a}\notin L^1(0,1)$.
\end{Assumptions}
For example, as $a$, we can consider $a(x)=|x-x_0|^{\alpha}$, $\alpha\ge 1$.
\newline
\indent
Thanks to the previous assumptions, we can characterize the spaces introduced in Section \ref{SECTION 2}. In particular, we have the following results.
\begin{Proposition} \label{proposition 3.1}
	Assume Hypothesis \ref{hp 3.1}. Then, the spaces $H^i_{\frac{1}{a}}(0, 1)$ 
	and $H^i_0(0, 1)$, $i=1,2$, coincide algebraically and the two norms are equivalent. Moreover, if $u \in H^2_0(0,1)$, i.e., $u \in H^2_{\frac{1}{a}}(0, 1)$, then $(au)(x_0)=(au')(x_0)=0$.
\end{Proposition}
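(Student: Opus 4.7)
The plan is to prove the set equality of the two spaces by two inclusions, relying on the one-dimensional Sobolev embedding together with Hypothesis \ref{hp 3.1}; the norm equivalence will then be a byproduct of the same estimates combined with Poincar\'e's inequality in $H^i_0(0,1)$, and the identity $(au')(x_0)=0$ is a pointwise consequence of the continuity of $a$ and $u'$ at $x_0$.

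First I would note that the inclusion $H^i_{\frac{1}{a}}(0,1)\subseteq H^i_0(0,1)$ is immediate from the definition $H^i_{\frac{1}{a}}(0,1)=L^2_{\frac{1}{a}}(0,1)\cap H^i_0(0,1)$. For the reverse inclusion, fix $u\in H^i_0(0,1)$ with $i\in\{1,2\}$. In one dimension $H^1_0(0,1)\hookrightarrow \mathcal{C}[0,1]$, and the identity $u(x)=\int_0^x u'(s)\,ds$ together with the Cauchy-Schwarz inequality yields $\|u\|_{L^\infty(0,1)}\le \|u'\|_{L^2(0,1)}$. Therefore
\[
\int_0^1 \frac{u^2}{a}\,dx \;\le\; \|u\|_{L^\infty(0,1)}^2 \int_0^1 \frac{1}{a}\,dx \;\le\; \|u'\|_{L^2(0,1)}^2 \,\|1/a\|_{L^1(0,1)},
\]
which is finite by Hypothesis \ref{hp 3.1}, so $u\in L^2_{\frac{1}{a}}(0,1)$ and hence $u\in H^i_{\frac{1}{a}}(0,1)$.

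For the equivalence of norms I would argue as follows. The iterated Poincar\'e inequality on $H^i_0(0,1)$ (using the vanishing of $u^{(k)}$ at $0$ and $1$ for $k<i$) produces a constant $C_i>0$ with $\|u\|_{H^i(0,1)}\le C_i \|u^{(i)}\|_{L^2(0,1)}\le C_i\|u\|_i$, which gives one direction. The reverse bound follows from the displayed estimate above, rewritten as $\|u\|_{L^2_{\frac{1}{a}}(0,1)}^2\le \|1/a\|_{L^1(0,1)}\|u\|_{H^1(0,1)}^2\le \|1/a\|_{L^1(0,1)}\|u\|_{H^i(0,1)}^2$, combined with $\|u^{(i)}\|_{L^2(0,1)}\le \|u\|_{H^i(0,1)}$.

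Finally, if $u\in H^2_{\frac{1}{a}}(0,1)=H^2_0(0,1)$, then $u'\in H^1(0,1)\hookrightarrow \mathcal{C}[0,1]$, so $u'$ is in particular continuous and bounded on $[0,1]$; since $a\in\mathcal{C}[0,1]$ with $a(x_0)=0$ by Hypothesis \ref{hp 3.1}, the product $au'$ is continuous with $(au')(x_0)=a(x_0)\,u'(x_0)=0$. The argument is essentially routine; the only place where the hypothesis $1/a\in L^1(0,1)$ plays a substantive role is the initial inclusion, and I do not anticipate any serious obstacle beyond observing that the embedding $H^1_0(0,1)\hookrightarrow\mathcal{C}[0,1]$ is insensitive to whether $x_0$ lies in the interior of $[0,1]$ or at one of its endpoints.
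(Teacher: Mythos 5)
Your proposal is correct and follows essentially the same route as the paper: the inclusion $H^i_0(0,1)\subseteq H^i_{\frac{1}{a}}(0,1)$ via the embedding $H^i_0(0,1)\hookrightarrow\mathcal{C}[0,1]$ together with $1/a\in L^1(0,1)$, the norm equivalence from the same estimate, and $(au')(x_0)=0$ from continuity of $u'$ and $a(x_0)=0$. The only cosmetic difference is that you make the sup bound explicit as $\|u\|_{L^\infty(0,1)}\le\|u'\|_{L^2(0,1)}$ via Cauchy--Schwarz, where the paper simply invokes the continuous embedding.
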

\begin{proof}
	By definition $H^i_{\frac{1}{a}}(0, 1)\subseteq H^i_0(0, 1)$, $i=1,2$. Moreover, if $u\in H^i_0(0, 1)$ then $u \in \mathcal C[0,1]$ and, using the fact that $\frac{1}{a}\in L^1(0,1)$, one has
	\begin{equation*}
		\int_{0}^{1}\frac{u^2}{a}\,dx \le \max_{[0,1]}u^2 \int_{0}^{1}\frac{1}{a}\,dx\in\mathbb{R},
	\end{equation*}
	i.e., $u\in L^2_{\frac{1}{a}}(0, 1)$. This implies $u\in H^i_{\frac{1}{a}}(0, 1)$.
	\newline
	\indent
	Furthermore, since $H^i_0(0, 1)$ is continuously embedded in $\mathcal{C}[0,1]$, one has that for all $u\in H^i_0(0, 1)$
	\begin{equation*}
		\|u\|_{L^2_{\frac{1}{a}}(0,1)}^2=\int_{0}^{1}\frac{u^2}{a}\,dx \le \Vert u \Vert^2_{\mathcal{C}[0,1]} \norm{\frac{1}{a}}_{L^1(0,1)}\le C \Vert u \Vert^2_{H^i_0(0, 1)},
	\end{equation*}
	where $C$ is a positive constant. In this way, for all $u \in H^i_0(0, 1)$,
	\begin{equation*}
		\Vert u \Vert_i \le (C+1) \Vert u \Vert_{H^i_0(0, 1)} \le (C+1) \Vert u \Vert_i,
	\end{equation*}
	for a positive constant $C$. 
	
	Now, take $u \in H^2_0(0,1)$. Hence $u, u' \in \mathcal C[0,1]$  and, using the fact that $a(x_0)=0$, one has $(au)(x_0)=(au')(x_0)=0$.
\end{proof}

Thus,  the space $\mathcal{C}^{\infty}_c(0, 1)$ is dense in $H^i_{\frac{1}{a}}(0, 1)$, $i=1,2$.
Observe that, if $x_0 \in \{0,1\}$, the conditions $(au)(x_0)=(au')(x_0)=0$ are clearly satisfied and the domain $D(A)$ defined in \eqref{operator} can be rewritten as
\[
D(A)=\left\{u\in H^2_0(0,1): \, au''''\in L^2_{\frac{1}{a}}(0, 1) \right\}.
\]

If $x_0 \in (0,1)$ and Hypothesis \ref{hp 3.1} is satisfied, we consider as $D(A)$ the same domain given in (\ref{operator}), i.e.,
\[
D(A):=\left\{u\in H^2_{\frac{1}{a}}(0,1): \, au''''\in L^2_{\frac{1}{a}}(0, 1)\right\}.\]
Clearly, thanks to Proposition \ref{proposition 3.1}, it can be rewritten as
\[
\begin{split}
	D(A)=\Bigl \{u\in H^2_0(0,1):\, &(au)(x_0)= (au')(x_0)=0, \, au''''\in L^2_{\frac{1}{a}}(0, 1) \Bigr\}.
\end{split}
\]
In this case \eqref{GF1} follows immediately for all $(u,v)\in D(A)\times H^2_{\frac{1}{a}}(0, 1)$. Indeed, if $au''''\in L^2_{\frac{1}{a}}(0, 1)$, then $u''''\in L^1(0,1)$ and $u\in W^{4,1}(0,1)\hookrightarrow \mathcal{C}^3[0,1]$. Hence, under Hypothesis \ref{hp 3.1}, Theorem \ref{Theorem 2.1} still holds and problem \eqref{P} admits a unique solution $u$.

In the strongly degenerate case, if $x_0 \in (0,1)$, the domain of $A$ is given again by \eqref{operator}, but in order to characterize it, we introduce the following space
\begin{equation*}
	X:=\biggl \{u\in H^2_{\frac{1}{a}}(0, 1): u(x_0)=(au')(x_0)=0 \biggr \}.
\end{equation*}
Notice that, when $x_0 \in \{0,1\}$, then 
\begin{equation}\label{eq1}
	X= H^2_{\frac{1}{a}}(0, 1)
\end{equation}
in a trivial way.
\

Actually, the equality \eqref{eq1} can also be proved if $x_0\in (0,1)$ adding a further assumption on the function $a$, as we will see in Proposition \ref{Prop 3.2}.
\begin{Assumptions}\label{hp 3.3}
	Assume that
	there exist $K \in [1,2]$ and $C>0$ such that 
	\[
	\frac{1}{a(x)}\le \frac{C}{|x-x_0|^K}
	\]
	for all $x \in [0,1]\setminus\{x_0\}$.
\end{Assumptions}
Observe that the previous hypothesis is obviously satisfied by the prototype $|x-x_0|^K$, where $K\in [1,2]$. 

Proceeding as in \cite[Lemma 3.7]{3}, if $x_0\in (0,1)$, or as in \cite[Proposition 2.6]{2}, if $x_0=0$ or $x_0=1$, one can prove the following result.
\begin{Lemma}\label{lemma norme equivalenti}
	Assume Hypotheses \ref{hp 3.2} and \ref{hp 3.3}. Then there exists a positive constant $C$ such that
	\begin{equation*}
		\int_{0}^{1}\frac{v^2}{a}\,dx \le C \int_{0}^{1}(v')^2dx  \quad\,\,\,\,\,\,\, \forall \;v\in X.
	\end{equation*}
\end{Lemma}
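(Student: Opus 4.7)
The inequality is a Hardy-type weighted estimate, and the feature of the space $X$ that enables it is the boundary condition $v(x_0)=0$ (the extra condition $(av')(x_0)=0$ plays no role here). My plan is to dominate $1/a$ by the prototype weight $|x-x_0|^{-2}$ and then invoke the classical Hardy inequality on each side of $x_0$.

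First I would use Hypothesis \ref{hp 3.3}.1 together with $K\in[1,2]$ and the elementary fact that $|x-x_0|\le 1$ for every $x\in[0,1]$ to obtain the pointwise bound
\[
\frac{1}{a(x)}\le \frac{C}{|x-x_0|^{K}}\le \frac{C}{|x-x_0|^{2}}\qquad \forall\,x\in[0,1]\setminus\{x_0\},
\]
since a smaller exponent produces a smaller negative power when the base lies in $(0,1]$. This reduces the problem to proving the pure Hardy inequality $\int_0^1 v^2/|x-x_0|^{2}\,dx\le C\int_0^1 (v')^2\,dx$.

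Next, noting that $v\in X\subset H^2_{\frac{1}{a}}(0,1)\subset H^2_0(0,1)\subset H^1(0,1)$ and $v(x_0)=0$, when $x_0\in(0,1)$ I would split $\int_0^1 v^2/a\,dx$ at $x_0$ and apply the classical one-sided Hardy inequality on each subinterval. On $(x_0,1)$ this yields
\[
\int_{x_0}^{1}\frac{v(x)^2}{(x-x_0)^2}\,dx\le 4\int_{x_0}^{1}v'(x)^2\,dx,
\]
and the change of variables $y=x_0-x$ delivers the analogous estimate on $(0,x_0)$. Summing the two contributions and combining with the first step gives
\[
\int_0^1\frac{v^2}{a}\,dx\le 4C\int_0^1(v')^2\,dx,
\]
which is the desired bound. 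The boundary cases $x_0=0$ and $x_0=1$ are handled in exactly the same way on the single appropriate interval, so that the splitting step is unnecessary and the proof simplifies. This mirrors \cite[Lemma 3.7]{3} and \cite[Proposition 2.6]{2}, adapted to the weighted setting defining $X$.

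The main obstacle, in my view, is the endpoint case $K=2$. A naive approach based on Cauchy--Schwarz applied to $v(x)=\int_{x_0}^{x}v'(s)\,ds$ followed by Fubini produces the factor $|x-x_0|^{1-K}$, which fails to be integrable at $x_0$ precisely when $K=2$; thus one cannot directly pull $\int_0^1 (v')^2\,ds$ out of the integral. It is exactly this obstruction that the classical Hardy inequality overcomes, being sharp at the critical exponent $-2$, which is why the unified comparison $|x-x_0|^{-K}\le |x-x_0|^{-2}$ performed in the first step is the correct device for covering the whole range $K\in[1,2]$ simultaneously.
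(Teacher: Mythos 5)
Your proof is correct, and it follows the same route the paper intends: the paper does not prove this lemma in-house but refers to \cite[Lemma 3.7]{3} and \cite[Proposition 2.6]{2}, and carries out exactly this Hardy-type argument for the analogous estimate in Proposition \ref{Prop 3.2''}. Your version is in fact slightly cleaner: since Hypothesis \ref{hp 3.3}.1 gives $1/a\le C|x-x_0|^{-K}\le C|x-x_0|^{-2}$ on all of $[0,1]\setminus\{x_0\}$, you can apply the one-sided Hardy inequality globally on $(0,x_0)$ and $(x_0,1)$, whereas the paper's argument splits off an $\varepsilon$-neighbourhood of $x_0$, bounds the outer pieces by $\min a$, and then needs Poincar\'e to absorb the resulting $\int_0^1 v^2\,dx$ term; both are valid, and your observations that only $v(x_0)=0$ is used and that the critical case $K=2$ is exactly what forces Hardy rather than Cauchy--Schwarz are accurate.
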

Actually Lemma \ref{lemma norme equivalenti} is proved in \cite[Proposition 2.6]{2} if $x_0=0$ or $x_0=1$ under a stronger assumption, but it is evident by the proof that it holds under Hypothesis \ref{hp 3.3}.
\begin{Proposition} \label{Prop 3.2}
	If Hypotheses \ref{hp 3.2} and \ref{hp 3.3} are satisfied and $x_0\in (0,1)$, then (\ref{eq1}) holds and the norms $\Vert u \Vert_{2}^2$ and $\int_{0}^{1}(u'')^2(x)dx$ are equivalent.
\end{Proposition}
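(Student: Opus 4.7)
The plan is to establish the two assertions separately: first the set equality $H^2_{\frac{1}{a}}(0,1)=X$, then the equivalence of the norms.

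\textbf{Step 1 (Set equality).} The inclusion $X\subset H^2_{\frac{1}{a}}(0,1)$ is immediate from the definition of $X$. For the reverse inclusion, when $x_0\in\{0,1\}$ there is nothing to prove by \eqref{eq1}--\eqref{tildeX}. So I would focus on the case $x_0\in(0,1)$. Take $u\in H^2_{\frac{1}{a}}(0,1)\subseteq H^2_0(0,1)$. Since $H^2_0(0,1)\hookrightarrow \mathcal C^1[0,1]$, both $u$ and $u'$ are continuous on $[0,1]$, so $(au')(x_0)=a(x_0)u'(x_0)=0$ follows trivially from $a(x_0)=0$. The substantive point is to verify $u(x_0)=0$, and this is where Hypothesis \ref{hp 3.2} enters.

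\textbf{Step 2 (Vanishing at $x_0$).} I would argue by contradiction. If $u(x_0)=c\neq 0$, by continuity of $u$ there exists $\delta>0$ so small that $u^2(x)\ge c^2/2$ for $x\in(x_0-\delta,x_0+\delta)\subset(0,1)$, hence
\[
\int_0^1\frac{u^2}{a}\,dx\ \ge\ \frac{c^2}{2}\int_{x_0-\delta}^{x_0+\delta}\frac{1}{a}\,dx.
\]
Now I would use that $a\in\mathcal C[0,1]$ with $a>0$ on $[0,1]\setminus\{x_0\}$, so $1/a$ is bounded (hence integrable) on every compact set not containing $x_0$. Consequently $1/a\notin L^1(0,1)$ (Hypothesis \ref{hp 3.2}) forces $\int_{x_0-\delta}^{x_0+\delta}1/a\,dx=+\infty$ for every $\delta>0$, contradicting $u\in L^2_{\frac{1}{a}}(0,1)$. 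Therefore $u(x_0)=0$ and $u\in X$.

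\textbf{Step 3 (Norm equivalence).} One direction is trivial: $\int_0^1(u'')^2\,dx\le \|u\|_2^2$. For the converse, I would combine Lemma \ref{lemma norme equivalenti} with a Poincar\'e-type inequality on $u'$. Indeed, since $u\in X$ by Step 1, Lemma \ref{lemma norme equivalenti} gives
\[
\int_0^1 \frac{u^2}{a}\,dx\ \le\ C\int_0^1 (u')^2\,dx.
\]
Because $u\in H^2_0(0,1)$, we have $u'\in H^1_0(0,1)$, so the classical Poincar\'e inequality yields $\int_0^1(u')^2\,dx\le C'\int_0^1(u'')^2\,dx$. Chaining these gives
\[
\|u\|_2^2=\int_0^1\frac{u^2}{a}\,dx+\int_0^1(u'')^2\,dx\ \le\ (1+CC')\int_0^1(u'')^2\,dx,
\]
which completes the equivalence.

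\textbf{Main obstacle.} The only genuinely delicate point is the proof of $u(x_0)=0$ in Step 2, which crucially exploits strong degeneracy: the continuity of $u$ together with $1/a\notin L^1$ forces the value at $x_0$ to vanish. Once this is secured, the rest is bookkeeping: the conclusion $(au')(x_0)=0$ is automatic from $\mathcal C^1$-regularity, and the norm equivalence is a direct application of Lemma \ref{lemma norme equivalenti} followed by Poincar\'e.
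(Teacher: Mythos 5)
Your proposal is correct and follows essentially the same route as the paper: the reverse inclusion via continuity of $u,u'$ plus the contradiction argument using $1/a\notin L^1(0,1)$ to force $u(x_0)=0$, the trivial vanishing of $(au')(x_0)$ from $a(x_0)=0$, and the norm equivalence via Lemma \ref{lemma norme equivalenti} chained with Poincar\'e applied to $u'\in H^1_0(0,1)$. Your explicit justification that $\int_{x_0-\delta}^{x_0+\delta}a^{-1}\,dx=+\infty$ is a slightly more detailed rendering of the same step the paper performs.
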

\begin{proof} 
	Obviously, $X \subseteq H^2_{\frac{1}{a}}(0, 1)$. Now, take $u\in H^2_{\frac{1}{a}}(0, 1)$. In order to have the thesis, it is sufficient to prove that
	$u(x_0)=(au')(x_0)=0$. Indeed,
	since $u \in H^2_0(0,1)$, then $u$, $u'$, and hence $au'$, belong to $\mathcal C[0,1]$. This implies that there exists
	\begin{equation*}
		\lim_{x \to x_0} u(x)= u(x_0)=L\in\mathbb{R}.
	\end{equation*}
	Clearly, $L=0$. Indeed, if $L\neq 0$, then there exists $C>0$ such that
	\begin{equation*}
		|u(x)|\ge C,
	\end{equation*}
	for all $x$ in a neighbourhood of $x_0$, $x\neq x_0$. Thus,
	\begin{equation*}
		\frac{u^2(x)}{a(x)} \ge \frac{C^2}{a(x)},
	\end{equation*}
	for all $x$ in a neighbourhood of $x_0$, $x\neq x_0$. Since, by hypothesis, $\frac{1}{a}\notin L^1(0,1)$, we obtain $u \notin L^2_{\frac{1}{a}}(0, 1)$. Hence $L=0$.
	Moreover, 	arguing as before, there exists
	\begin{equation*}
		\lim_{x \to x_0} u'(x)= u'(x_0)=M\in\mathbb{R}
	\end{equation*}	
	and hence, using the fact that $a(x_0)=0$, one has
	\[
	\lim_{x \to x_0} (au')(x)=0.
	\]

	Now, we prove that the two norms are equivalent. To this aim, take $u \in X$; thus, by Lemma \ref{lemma norme equivalenti} and using the fact that $u'(0)=u'(1)=0$, one can prove that there exists a positive constant $C$ such that
	\begin{equation*}
		\Vert u'' \Vert^2_{L^2(0,1)} \le \Vert u \Vert^2_{2} \le C( \Vert u' \Vert^2_{L^2(0,1)}+\Vert u'' \Vert^2_{L^2(0,1)}) \le  C \Vert u'' \Vert^2_{L^2(0,1)}.
	\end{equation*}
	Hence the two norms are equivalent.
\end{proof}

As a consequence of the previous proposition one has that if Hypotheses \ref{hp 3.2} and \ref{hp 3.3} are satisfied and $x_0\in (0,1)$, then the domain of the operator can be written as
\[
D(A):=\left\{ u \in H^2_{\frac{1}{a}}(0,1):  u(x_0)=(au')(x_0)=0 \text{ and } au''''\in L^2_{\frac{1}{a}}(0,1)\right\}.
\]
Hence, proceeding as in the proof of Proposition \ref{Proposition Green's Formula}, one can prove that \eqref{GF1} is satisfied. Indeed, in this case, in order to prove that $H^2_c(0,1)$ is dense in $H^2_{\frac{1}{a}}(0,1)$ we have to modify the definition of $\xi_n$ in a suitable way (see also \cite{3}). Then, one can consider $v\in H^2_c(0, 1)$ and take $\delta >0$ such that $ \supp v\subset \mathcal K$, where $\mathcal K:= [0, x_0-\delta]\cup[x_0+ \delta, 1]$. By definition of $D(A)$, $u''''\in L^2(\mathcal K)$, thus $u'' \in H^2(\mathcal K)$ (by Lemma \ref{derivatadistribuzionale} with $n=p=2$ and $f=u''$) and, in particular, $u \in H^4(\mathcal K)$. Hence, we can integrate by parts, obtaining
\[
\begin{split}
	\int_{0}^{1}u''''v\,dx
	&=\int_{0}^{x_0-\delta}u''''v\,dx+\int_{x_0-\delta}^{x_0+\delta}u''''v\,dx+\int_{x_0+\delta}^{1}u''''v\,dx \\
	&=\int_{0}^{x_0-\delta}u''v''dx+\int_{x_0-\delta}^{x_0+\delta}u''''v\,dx+\int_{x_0+\delta}^{1}u''v''dx,
\end{split}
\]
since $v\in H^2_c(0, 1)$. The rest of the proof is similar to the case $x_0\in \{0,1\}$, so we omit it. Hence, under Hypotheses \ref{hp 3.2} and \ref{hp 3.3}, Theorem \ref{Theorem 2.1} still holds and problem \eqref{P} admits a unique solution $u$.

Now, we come back to Proposition \ref{Prop 3.2}, observing that if we know a priori that $u' \in L^2_{\frac{1}{a}}(0,1)$, then we could prove that $u'(x_0)=0$. Indeed the next result holds.
\begin{Proposition} \label{Prop 3.2'}
	Assume Hypotheses \ref{hp 3.2} and \ref{hp 3.3} and $x_0\in (0,1)$. If $u \in  H^2_{\frac{1}{a}}(0, 1)$ is such that $u' \in L^2_{\frac{1}{a}}(0,1)$, then $u'(x_0)=0$. 
\end{Proposition}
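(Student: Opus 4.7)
The plan is to mimic the argument used in Proposition \ref{Prop 3.2} for showing $u(x_0)=0$, but applied at the level of the first derivative. The key observation is that $u'$ already plays the role of a continuous function on $[0,1]$: since $u\in H^2_{\frac{1}{a}}(0,1)\subseteq H^2_0(0,1)$, we have $u'\in H^1_0(0,1)$, and by the one-dimensional Sobolev embedding $H^1(0,1)\hookrightarrow \mathcal{C}[0,1]$. Therefore the pointwise value $u'(x_0)$ is well defined, and $\lim_{x\to x_0}u'(x)=u'(x_0)=:M\in\mathbb{R}$.

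Next I would argue by contradiction: suppose $M\neq 0$. By continuity of $u'$ at $x_0$, there exist $\delta>0$ and $c>0$ such that $|u'(x)|\ge c$ for every $x\in (x_0-\delta,x_0+\delta)\cap[0,1]$, $x\neq x_0$. Hence on this punctured neighborhood
\[
\frac{(u'(x))^2}{a(x)}\ge \frac{c^2}{a(x)}.
\]
Now I would invoke Hypothesis \ref{hp 3.2}: the strong degeneracy assumption gives $\frac{1}{a}\notin L^1(0,1)$, while the continuity and strict positivity of $a$ off $x_0$ imply that $\frac{1}{a}$ is bounded on any compact subset of $[0,1]\setminus\{x_0\}$. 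Consequently, the divergence of $\int_0^1\frac{1}{a}\,dx$ must arise from every neighborhood of $x_0$, i.e.,
\[
\int_{(x_0-\delta,x_0+\delta)\cap(0,1)}\frac{1}{a(x)}\,dx=+\infty.
\]

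Combining the two, I obtain $\int_0^1\frac{(u')^2}{a}\,dx=+\infty$, which contradicts the assumption $u'\in L^2_{\frac{1}{a}}(0,1)$. Therefore $M=0$, i.e., $u'(x_0)=0$. There is really no major obstacle here: Hypothesis \ref{hp 3.3}.1 is not actually used in the argument above (it served to give the equivalence of norms in Proposition \ref{Prop 3.2}), and the whole proof reduces to the same local non-integrability contradiction used to show $u(x_0)=0$, transported to $u'$ via the Sobolev regularity of $u\in H^2_0(0,1)$.
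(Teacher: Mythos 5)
Your proof is correct and follows essentially the same route as the paper: continuity of $u'$ via $u\in H^2_0(0,1)$, then a contradiction between $|u'|\ge c$ near $x_0$ and the non-integrability of $\frac{1}{a}$, which (as you note) must be concentrated at $x_0$. Your side remark that Hypothesis \ref{hp 3.3}.1 is not actually needed is also accurate, since the paper's own argument only invokes $\frac{1}{a}\notin L^1(0,1)$.
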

\begin{proof} 
	Let $u \in H^2_{\frac{1}{a}}(0, 1)$ so that $u'\in L^2_{\frac{1}{a}}(0,1)$.  In order to obtain the thesis, one can proceed as in Proposition \ref{Prop 3.2}. Indeed, as before,
	$u' \in \mathcal C[0,1]$; this implies that there exists
	\begin{equation*}
		\lim_{x \to x_0} u'(x)= u'(x_0)=M\in\mathbb{R}.
	\end{equation*}
	Clearly, $M=0$. Indeed, if $M\neq 0$, then there exists $C>0$ such that
	\begin{equation*}
		|u'(x)|\ge C,
	\end{equation*}
	for all $x$ in a neighbourhood of $x_0$, $x\neq x_0$. Thus,
	\begin{equation*}
		\frac{(u')^2(x)}{a(x)} \ge \frac{C^2}{a(x)},
	\end{equation*}
	for all $x$ in a neighbourhood of $x_0$, $x\neq x_0$. As in Proposition \ref{Prop 3.2}, one can conclude that $M=0$.
\end{proof}

Observe that, if $x_0\in \{0,1\}$ and $u\in H^2_{\frac{1}{a}}(0, 1)$, then $u'(x_0)=0$ without additional assumptions on $a$ or on $u$.

Thanks to the previous proposition, one can prove the next result. To this purpose,
define
\[
\widetilde H^2_{\frac{1}{a}}(0, 1): = \left\{u\in H^2_{\frac{1}{a}}(0, 1): u' \in L^2_{\frac{1}{a}}(0,1)\right\}
\]
and
\[
\widetilde X:=\biggl \{u\in H^2_{\frac{1}{a}}(0, 1): u(x_0)=u'(x_0)=0 \biggr \}.
\]

\begin{Proposition} \label{Prop 3.2''}
	If Hypotheses \ref{hp 3.2} and \ref{hp 3.3} are satisfied, then there exists a positive constant $C$ such that
	\begin{equation}\label{primadis}
		\int_{0}^{1}\frac{(u')^2}{a}\,dx \le C \int_{0}^{1}(u'')^2dx,
	\end{equation}
	for all $u \in \widetilde X$.
	Hence 
	\begin{equation}\label{prima=}
		\widetilde H^2_{\frac{1}{a}}(0, 1)=\widetilde X.
	\end{equation}
\end{Proposition}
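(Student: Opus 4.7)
The plan is to reduce \eqref{primadis} to an application, in disguise, of the weighted Hardy-type estimate of Lemma \ref{lemma norme equivalenti}, and then to deduce \eqref{prima=} from it. Given $u\in\widetilde X$, by definition $u'(x_0)=0$, while $u\in H^2_0(0,1)$ implies $u'(0)=u'(1)=0$ and $u''\in L^2(0,1)$. Hence $u'$ is itself an $H^1_0(0,1)$ function vanishing at the degeneracy point $x_0$, which is precisely the structural condition that, together with Hypothesis \ref{hp 3.3}.1, drives the estimate in Lemma \ref{lemma norme equivalenti}. The additional requirement $(av')(x_0)=0$ that appears in the definition of $X$ plays no role in that estimate, and so applying the conclusion of Lemma \ref{lemma norme equivalenti} to $v:=u'$ yields \eqref{primadis}.

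If one prefers a self-contained verification that avoids re-reading the proof of Lemma \ref{lemma norme equivalenti}, one simply writes, for $x\neq x_0$,
\[
(u'(x))^2=\left(\int_{x_0}^x u''(s)\,ds\right)^2\le |x-x_0|\,\biggl|\int_{x_0}^x (u''(s))^2\,ds\biggr|,
\]
uses Hypothesis \ref{hp 3.3}.1 to bound $\frac{1}{a(x)}\le C|x-x_0|^{-K}$ with $K\in[1,2]$, and concludes via Fubini when $K<2$ or via the classical one-dimensional Hardy inequality when $K=2$, mirroring step by step the argument already employed for Lemma \ref{lemma norme equivalenti}.

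Once \eqref{primadis} is established, the identity \eqref{prima=} is essentially free. For the inclusion $\widetilde X\subseteq \widetilde H^2_{\frac{1}{a}}(0,1)$, given $u\in\widetilde X$ one has $u''\in L^2(0,1)$, so \eqref{primadis} forces $u'\in L^2_{\frac{1}{a}}(0,1)$, i.e.\ $u\in \widetilde H^2_{\frac{1}{a}}(0,1)$. For the reverse inclusion, take $u\in\widetilde H^2_{\frac{1}{a}}(0,1)$: Proposition \ref{Prop 3.2} gives $u(x_0)=0$, and Proposition \ref{Prop 3.2'}, whose hypotheses hold by the very definition of $\widetilde H^2_{\frac{1}{a}}(0,1)$, gives $u'(x_0)=0$, so that $u\in\widetilde X$. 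The main obstacle in the whole argument is conceptual rather than computational: one must observe that the Hardy-type bound of Lemma \ref{lemma norme equivalenti} is really a statement about $H^1$ functions vanishing at $x_0$ and is not tied to the full $H^2_{\frac{1}{a}}$ regularity packaged in the space $X$; once this is noticed, applying it with $u'$ in place of $v$ is immediate and the rest of the statement follows at once.
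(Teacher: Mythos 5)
Your proof is correct and follows essentially the same route as the paper's: there the authors also set $w:=u'\in H^1_0(0,1)$, observe $w(x_0)=0$, and run the Hardy-inequality argument on $w$ directly (splitting $[0,1]$ at $x_0\pm\varepsilon$ and using Poincar\'e away from $x_0$), rather than invoking Lemma \ref{lemma norme equivalenti} verbatim --- which, as you rightly flag, cannot be applied literally since $u'$ need not lie in $X$. The deduction of \eqref{prima=} from \eqref{primadis} together with Propositions \ref{Prop 3.2} and \ref{Prop 3.2'} is exactly the paper's argument.
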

\begin{proof} Take $u \in \widetilde X$ and assume, first of all, that $x_0 \in \{0,1\}$. Then \eqref{primadis} follows by \cite[Lemma 3.7]{2} applied to $w:= u' \in H^1_0(0,1)$.
	
	Now, assume $x_0 \in (0,1)$ and define again $w:= u' \in H^1_0(0,1)$. Clearly, $w(x_0)=0$.
	By Hypothesis \ref{hp 3.3}, there exists $C>0$ such that
	$\displaystyle\frac{1}{a(x)} \le \frac{C}{|x-x_0|^2}$ for all $x
	\in [0,1]\setminus \{x_0\}$.
	Then, for  a suitable
	$\varepsilon>0$ and using the assumptions on $a$ and the Hardy's inequality, one has \[
	\begin{aligned}
		\int_0^1 w^2\frac{1}{a} dx &=\int_{0}^{x_0-\epsilon}w^2\frac{1}{a} dx
		+ \int_{x_0-\epsilon}^{x_0+ \epsilon} w^2\frac{1}{a} dx +
		\int_{x_0+
			\epsilon}^{1} w^2\frac{1}{a} dx
		\\
		&\le
		\frac{1}{\min_{[0, x_0-\epsilon]}a}\int_0^{x_0-\epsilon} w^2dx
		+
		\int_{x_0-\epsilon}^{x_0+\epsilon} w^2\frac{1}{a}dx
		\\& +
		\frac{1}{\min_{[x_0+\epsilon,1]}a}\int_{x_0 +\epsilon}^1 w^2dx
		\\
		& \le
		\frac{1}{\min_{[0, x_0-\epsilon]\cup [x_0+\epsilon,1]}a}\int_0^1 w^2 dx
		+ \int_{x_0-\epsilon}^{x_0}w^2\frac{1}{a} dx  + \int_{x_0}^{x_0+ \epsilon}w^2\frac{1}{a} dx
		\\
		& \le
		\frac{1}{\min_{[0, x_0-\epsilon]\cup [x_0+\epsilon,1]}a}\int_0^1 w^2 dx
		+ C\int_{x_0-\epsilon}^{x_0}w^2\frac{1}{|x-x_0|^2} dx \\
		&+C\int_{x_0}^{x_0+ \epsilon}w^2\frac{1}{|x-x_0|^2} dx \\
		& \le
		\frac{1}{\min_{[0, x_0-\epsilon]\cup [x_0+\epsilon,1]}a}\int_0^1 w^2 dx
		+ C_H\!\int_{x_0-\epsilon}^{x_0}\!\!(w')^2 dx \\
		& +C_H\!\int_{x_0}^{x_0+
			\epsilon}\!\!(w')^2dx \\
		& \le
		C \left( \int_0^{1}w^2 dx  + \int_0^1 (w')^2 dx\right),
	\end{aligned}
	\]
	for a positive constant $C$; here $C_H$ is the Hardy constant.
	By Poincar\'{e}'s inequality, it follows that
	\[
	\int_0^1 w^2\frac{1}{a} dx\le C \int_0^1 (w')^2dx
	\]
	for a suitable constant $C$. Hence \eqref{primadis} holds. This implies that $\widetilde X \subseteq \widetilde H^2_{\frac{1}{a}}(0,1)$. By Proposition \ref{Prop 3.2'} the other inclusion follows immediately, hence \eqref{prima=} holds. 
\end{proof}

\begin{Remark}\label{rem}
	Clearly, if  Hypotheses \ref{hp 3.2} and \ref{hp 3.3} are satisfied and  $u\in H^2_{\frac{1}{a}}(0, 1)$ satisfies \eqref{primadis}, then $u' \in L^2_{\frac{1}{a}}(0,1)$ and again $u'(x_0)=0$ by Proposition  \ref{Prop 3.2''}.
\end{Remark}
Adding an additional assumption on the function $a$, one can prove a characterization on $D(A)$.
\begin{Assumptions}\label{hp 3.3.2}
	Assume that the function $a$ belongs to $W^{1, \infty}(0,1)$ and is
	\[
	\!\!\!\!\!\!\!\!\!\!\!\!\!\!\!\!\begin{cases}\text{non increasing on the left and non decreasing on the right of } x_0, &\!\!\!\!\!\!\text{ if } x_0 \in (0,1),\\
		\text{non decreasing on the right of } x_0, &\!\!\!\!\!\!\text{ if } x_0=0,\\
		\text{non increasing on the left of } x_0, &\!\!\!\!\!\!\text{ if } x_0=1.
	\end{cases}\]
\end{Assumptions}
\begin{Proposition} 	
	Let
	\begin{equation*}
		D:=\biggl \{u \in H^2_{\frac{1}{a}}(0, 1):\, au''''\in L^2_{\frac{1}{a}}(0, 1), \; u(x_0)=(au')(x_0)=(au'')(x_0)=0 \biggr \}.
	\end{equation*}
	If Hypotheses \ref{hp 3.2}, \ref{hp 3.3} and \ref{hp 3.3.2} are satisfied, then $D(A) =D$.
\end{Proposition}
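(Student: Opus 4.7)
The plan is to prove the two inclusions $D\subseteq D(A)$ and $D(A)\subseteq D$ separately. The first is immediate from the definitions, since $D$ merely adds three vanishing conditions at $x_0$ to those already defining $D(A)$. For the converse, I fix $u\in D(A)$. By Proposition \ref{Prop 3.2} (which requires only Hypothesis \ref{hp 3.3}.1), one has $H^2_{\frac{1}{a}}(0,1)=X$, so automatically $u(x_0)=0$, and $(au')(x_0)=0$ follows from $a(x_0)=0$ together with the continuity of $u'$ on $[0,1]$. Therefore the only nontrivial condition still to be checked is $(au'')(x_0)=0$.

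The assumption $au''''\in L^2_{\frac{1}{a}}(0,1)$ is equivalent to $u''''\in L^2_a(0,1)$, and since $a$ is bounded below on every compact subset of $[0,1]\setminus\{x_0\}$, this gives $u\in H^4_{\mathrm{loc}}([0,1]\setminus\{x_0\})$. In particular $u''$ and $u'''$ are continuous away from $x_0$, so the meaningful interpretation of $(au'')(x_0)=0$ is $\lim_{x\to x_0}a(x)u''(x)=0$. I will establish this by showing that $u''$ stays bounded in a punctured neighbourhood of $x_0$. Fix a reference point $y_0$ on one side of $x_0$. For $x$ on the same side, Cauchy--Schwarz yields
\begin{equation*}
|u'''(x)-u'''(y_0)|\le\left(\int_{\min(x,y_0)}^{\max(x,y_0)}\frac{dt}{a(t)}\right)^{1/2}\left(\int_0^1 a(t)(u''''(t))^2\,dt\right)^{1/2}.
\end{equation*}
By Hypothesis \ref{hp 3.3}.1, the first factor is of order $|x-x_0|^{(1-K)/2}$ when $K\in(1,2]$ (and of order $\sqrt{|\log|x-x_0||}$ when $K=1$). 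Integrating this bound once more, $|u''(x)-u''(y_0)|\le\int_{\min(x,y_0)}^{\max(x,y_0)}|u'''(t)|\,dt$ remains bounded as $x\to x_0$, because the exponent $(1-K)/2$ lies in $[-1/2,0]\subset(-1,0]$, hence is integrable. Consequently $u''$ is bounded near $x_0$, and from $a(x)\to 0$ we deduce $a(x)u''(x)\to 0$. The argument is symmetric on the other side of $x_0$ and adapts immediately to $x_0\in\{0,1\}$.

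The main technical point is this two-step integration: it is precisely the restriction $K\le 2$ in Hypothesis \ref{hp 3.3}.1 that forces the exponent governing the growth of $|u'''|$ near $x_0$ to stay strictly above $-1$, so that the second integration produces a bounded (rather than divergent) quantity; for $K>2$ the argument would fail. The monotonicity part Hypothesis \ref{hp 3.3}.2 does not appear directly in this step, but is inherited from the earlier propositions (through Proposition \ref{Prop 3.2}) that allow the reduction to the single new condition $(au'')(x_0)=0$.
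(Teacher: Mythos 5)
Your proof is correct, but it follows a genuinely different route from the paper's. After the common reduction via Proposition \ref{Prop 3.2} to the single remaining condition $(au'')(x_0)=0$, the paper writes $(au'')(x)-(au'')(0)=\int_0^x (a'u'')(t)\,dt+\int_0^x (au''')(t)\,dt$, proves that both integrands belong to $L^1$ up to $x_0$ --- the first using $a\in W^{1,\infty}(0,1)$ and $u''\in L^2(0,1)$, the second by expanding $u'''(t)=u'''(0)+\int_0^t u''''(s)\,ds$ and invoking the monotonicity Hypothesis \ref{hp 3.3}.2 together with $au''''\in L^2_{\frac{1}{a}}(0,1)$ --- so that the one-sided limits $L$ and $M$ of $au''$ at $x_0$ exist, and then rules out $L\neq 0$ (and $M\neq 0$) by contradiction: otherwise $|u''|\ge C/a$ near $x_0$, which is incompatible with $u''\in L^1(0,1)$ since $\frac{1}{a}\notin L^1(0,1)$. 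You instead integrate $u''''$ twice starting from the weighted Cauchy--Schwarz estimate, concluding that $u''$ is \emph{bounded} in a punctured neighbourhood of $x_0$, whence $au''\to 0$ trivially. Your argument is more quantitative and yields a strictly stronger conclusion (boundedness of $u''$, not merely $au''\to 0$); moreover it uses neither the monotonicity Hypothesis \ref{hp 3.3}.2 nor the non-integrability of $\frac{1}{a}$ in this final step, only the pointwise bound of Hypothesis \ref{hp 3.3}.1. One small inaccuracy in your closing remark: the second integration converges whenever $(1-K)/2>-1$, i.e.\ for all $K<3$, so your argument would not in fact fail for $K\in(2,3)$; the restriction $K\le 2$ is not what saves this step. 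This does not affect the validity of the proof for the stated range $K\in[1,2]$.
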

\begin{proof}
	Evidently $D\subseteq D(A)$. Now, we take $u \in D(A)$ and we prove that $u \in D$. By Proposition \ref{Prop 3.2}, $u(x_0)=(au')(x_0)=0$. Thus, it is sufficient to prove that $(au'')(x_0)=0$ (observe that, since $u \in H^2(0,1)$ and $a \in W^{1,\infty}(0, 1)$, then $u'' \in L^2(0,1)$, $\sqrt{a}u'' \in L^2(0,1)$ and $au'' \in L^2(0,1)$).
	
	Let $\delta >0$ sufficiently small and $x=x_0-\delta$. From the proof of Theorem \ref{Theorem 2.1} it follows that $u\in H^{4}(\mathcal  K)$, where $\mathcal  K:=[0,x]$. In particular, since $au''$ is a continuous function in $\mathcal  K$, the following formula holds:
	\begin{equation}\label{second term}
		(au'')(x)-(au'')(0)=\int_{0}^{x}(au'')'(t)dt=\int_{0}^{x}(a'u'')(t)dt+\int_{0}^{x}(au''')(t)dt.
	\end{equation}
	Now we will estimate the last two terms in (\ref{second term}). To this aim, observe that
	\begin{equation*}
		\int_{0}^{x}(a'u'')(t)dt = \int_{0}^{x_0}(a'u'')(t)dt-\int_{x}^{x_0}(a'u'')(t)dt
	\end{equation*}
	and
	\begin{equation}\label{formula (3.7)}
		\begin{split}
			\int_{0}^{x}(au''')(t)dt
			&=\int_{0}^{x}a(t)\biggl (\int_{0}^{t}u''''(s)ds+u'''(0)\biggr )dt\\
			&= \int_{0}^{x}a(t)\int_{0}^{t}u''''(s)ds dt+\int_{0}^{x}a(t)u'''(0)dt.
		\end{split}
	\end{equation}
	In particular, we have
	\[
	\lim_{x \to x_0}\int_{0}^{x}(a'u'')(t)dt = \int_{0}^{x_0}(a'u'')(t)dt.
	\]
	Indeed, Hypothesis \ref{hp 3.2} and  $u''\in L^2(\mathcal  K)\subset L^1(\mathcal  K)$ imply that
	\[
	\left| \int_{x}^{x_0}(a'u'')(t)dt \right| \le \int_{x}^{x_0}|(a'u'')(t)|dt\le \Vert a'\Vert_{L^\infty(0,1)}\int_{x}^{x_0}|u''(t)|dt.
	\]
	Hence
	\begin{equation*}
		\lim_{x \to x_0}\int_{x}^{x_0}(a'u'')(t)dt = 0
	\end{equation*}
	by the absolute continuity of the Lebesgue integral. As far as the two terms in (\ref{formula (3.7)}) are concerned, we have
	\begin{equation*}
		\lim_{x \to x_0}\int_{0}^{x}a(t)u'''(0)dt=u'''(0)\lim_{x \to x_0}\int_{0}^{x}a(t)dt = u'''(0)\int_{0}^{x_0}a(t)dt,
	\end{equation*}
	where we recall that $\int_{0}^{x}a(t)\,dt$ can be written as $\int_{0}^{x_0}a(t)\,dt-\int_{x}^{x_0}a(t)\,dt$ and, thanks to the hypotheses on $a$, $\lim_{x \to x_0}\int_{x}^{x_0}a(t)\,dt = 0$. Moreover, using the monotonicity condition of $a$ on the left of $x_0$, one has
	\[
	\begin{aligned}
		\left|a(t)\int_{0}^{t}u''''(s)ds\right| &=\left|\sqrt{a(t)}\int_{0}^{t}\sqrt{a(t)}u''''(s)       ds\right|\\
		&\le\sqrt{a(t)}\int_{0}^{t}\sqrt{a(s)}|u''''(s)  |     ds\\
	\end{aligned}
	\]
	for all $t\in (0,x)$. Using again the assumptions on $a$ and the fact that $au''''\in L_{\frac{1}{a}}^2(0,1)$, one can conclude that
	\[
	f(t) :=a(t)\int_{0}^{t}u''''(s) ds \in L^1(0,x_0).
	\]
	Arguing as before,
	
		\begin{equation*}
			\lim_{x \to x_0}\int_{0}^{x}f(t)dt=\int_{0}^{x_0}f(t)dt
		\end{equation*}
		and hence
		\begin{equation*}
			\exists\lim_{x \to x^-_0}(au'')(x)=(au'')(x^-_0)=L\in\mathbb{R}.
		\end{equation*}
		In a similar way, one can prove that
		\begin{equation*}
			\exists\lim_{x \to x^+_0}(au'')(x)=(au'')(x^+_0)=M\in\mathbb{R}.
		\end{equation*}
		In order to complete the proof it remains to prove that $L=M=0$. Indeed, if $L\neq 0$, then there exist $C>0$ and a left neighbourhood $I_-$ of $x_0$ such that 
		\begin{equation*}
			a(x)|u''(x)| \ge C \quad\,\,\,\,\,\,\,\,\, \forall\, x \in I_-.
		\end{equation*}
		Thus,
		\begin{equation*}
			|u''(x)| \ge \frac{C}{a(x)}\quad\,\,\,\,\,\,\,\, \forall \, x \in I_-.
		\end{equation*}
		But $\frac{1}{a}\notin L^1(0,1)$, thus $u'' \notin L^1(0,1)$ and this contradicts the fact that $u''\in L^2(0,1)\subset L^1(0,1)$. Hence $L=0$; analogously one can prove $M=0$. Thus, we obtain $L=M=0$. In this way $\lim_{x \to x_0}(au'')(x)$ exists and it is equal to $0$, i.e., $(au'')(x_0)=0$. 
	\end{proof}

	\section{The general operator of order $2n$}\label{Section4}
	In this section we will extend the previous results to a general operator $A_nu:= au^{(2n)}$. To this aim, taking $n \in \N$ with $n \ge 3$ (the case $n=2$ is considered in the previous sections), we introduce the following spaces
	
	\[
	H^i_{\frac{1}{a}}(0, 1):= L^2_{\frac{1}{a}}(0, 1)\cap H^i_0(0, 1),
	\]
	with the norm
	\begin{equation*}
		\norm{u}_{H^i_{\frac{1}{a}}(0, 1)}^2:=\norm{u}^2_{L^2_{\frac{1}{a}}(0, 1)} + \sum_{k=1}^{i}\Vert u^{(k)}\Vert^2_{L^2(0, 1)}\quad\,\,\,\,\,\,\, \forall \; u \in H^i_{\frac{1}{a}}(0, 1),
	\end{equation*}
	$i=3,...., n$.
	
	As before, $H^i_0(0, 1):= \{u\in H^{i}(0, 1): u^{(k)}(j)=0,\, j=0,1,\, k=0,1, ..., i-1\}$, $i=3,...,n$, and for all $u \in H^i_{\frac{1}{a}}(0, 1)$ the norms $\norm{u}_{H^i_{\frac{1}{a}}(0, 1)}^2$ and
	\[
	\|u\|_i^2:=\norm{u}^2_{L^2_{\frac{1}{a}}(0, 1)}+ \|u^{(i)}\|^2_{L^2(0, 1)}
	\]
	are equivalent; hence in the following we will use $\|\cdot\|_i$.
	
	Define the operator $A_n$ by 
	\begin{equation*}
		A_nu:=au^{(2n)}, \quad \forall \; u \in
		D(A_n):=\biggl \{u\in H^n_{\frac{1}{a}}(0, 1):\, au^{(2n)}\in L^2_{\frac{1}{a}}(0, 1) \biggr \},
	\end{equation*}
	if $x_0 \in \{0,1\}$, or in the strongly degenerate case when $x_0 \in (0,1)$. 
	The next general Green's formula holds.
	\begin{Proposition}[Green's Formula]\label{Proposition Green's Formula2n}
		Assume Hypothesis \ref{ipo1}	 if $x_0 \in \{0,1\}$ or Hypotheses \ref{hp 3.2} and \ref{hp 3.3} if $x_0 \in (0,1)$. Then, for all $n \in \N$ with $n \ge  3$ and for all $(u,v)\in D(A_n)\times H^n_{\frac{1}{a}}(0, 1)$ one has
		\begin{equation*}
			\int_{0}^{1}u^{(2n)}v\,dx=(-1)^n\int_{0}^{1}u^{(n)}v^{(n)}dx.
		\end{equation*}
	\end{Proposition}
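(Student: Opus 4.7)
The plan is to mimic the structure of the proof of Proposition~\ref{Proposition Green's Formula} for the fourth order operator, adapting it to $2n$ derivatives. The proof will consist of a density step, a local integration by parts step, and a passage to the limit.

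First, I would establish the analogue of the density lemma: the space
\[
H^n_c(0,1):=\{v\in H^n(0,1):\supp v\subset (0,1)\setminus\{x_0\}\}
\]
is dense in $H^n_{\frac{1}{a}}(0,1)$. Following the construction used in Proposition~\ref{Proposition Green's Formula}, for any $v\in H^n_{\frac{1}{a}}(0,1)$ I would build a truncation sequence $v_m:=\xi_m v$, where $\xi_m$ is a piecewise polynomial cutoff of degree $2n-1$ vanishing in a $\frac1m$-neighborhood of $\{0,1\}$ (and of $x_0$ if $x_0\in(0,1)$) and equal to $1$ away from a $\frac2m$-neighborhood, chosen so that $\xi_m\in C^{n-1}([0,1])$ with $\|\xi_m^{(k)}\|_\infty\le C m^k$ for $k=0,\dots,n$. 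The convergence $v_m\to v$ in $L^2_{\frac{1}{a}}(0,1)$ follows by dominated convergence. The convergence of the top derivatives amounts to showing that each term $\int (\xi_m^{(k)}v^{(n-k)})^2\,dx$ over the transition intervals tends to $0$; this is done by iterated Hardy-type inequalities of the form $v^{(n-k)}(x)^2\le x^{2k+1}\int_0^x (v^{(n)})^2$, exactly as in the case $n=2$ but applied $k$ times. The case $x_0\in(0,1)$ requires an analogous cutoff symmetric about $x_0$.

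Second, I would fix $u\in D(A_n)$ and $v\in H^n_{\frac{1}{a}}(0,1)$, set
\[
\Phi(v):=\int_0^1 u^{(2n)}v\,dx-(-1)^n\int_0^1 u^{(n)}v^{(n)}\,dx,
\]
noting that $\Phi$ is well defined and bounded on $H^n_{\frac{1}{a}}(0,1)$ since $u^{(2n)}v=\sqrt a\,u^{(2n)}\cdot \frac{v}{\sqrt a}\in L^1(0,1)$ by Cauchy--Schwarz, so $|\Phi(v)|\le C(u)\,\|v\|_n$. The goal becomes to show $\Phi\equiv 0$ on $H^n_c(0,1)$.

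Third, fix $v\in H^n_c(0,1)$ and choose $\delta>0$ with $\supp v\subset K:=[0,x_0-\delta]\cup[x_0+\delta,1]$ (or the one-sided analogue if $x_0\in\{0,1\}$). On each component of $K$ the coefficient $a$ is bounded below, so $u^{(2n)}\in L^2(K)$. Applying Lemma~\ref{derivatadistribuzionale} with $p=2$ and starting from $f=u^{(n)}$ (whose $n$-th distributional derivative $u^{(2n)}$ is in $L^2(K)$), one gets $u^{(n)}\in H^n(K)$, hence $u\in H^{2n}(K)$. Integration by parts $n$ times on each component then gives
\[
\int_K u^{(2n)}v\,dx=(-1)^n\int_K u^{(n)}v^{(n)}\,dx,
\]
all boundary contributions at $0,1$ vanishing because $v^{(k)}(0)=v^{(k)}(1)=0$ for $k=0,\dots,n-1$ (since $v\in H^n_0$), and those at $x_0\pm\delta$ vanishing because $v$ is identically zero there. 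Since $u^{(2n)}v$ and $u^{(n)}v^{(n)}$ both lie in $L^1(0,1)$ and the integrals over $(x_0-\delta,x_0+\delta)$ tend to $0$ as $\delta\to 0$ by absolute continuity of the Lebesgue integral, one concludes $\Phi(v)=0$ for every $v\in H^n_c(0,1)$. Combined with the first step and the continuity of $\Phi$, this gives $\Phi\equiv 0$ on $H^n_{\frac{1}{a}}(0,1)$, which is the claimed identity.

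The main obstacle will be the density step: constructing a cutoff with enough regularity (it must lie in $H^n$, so we need matching of derivatives up to order $n-1$ at the transition points) and controlling each cross-term $\xi_m^{(k)}v^{(n-k)}$ in the $L^2$ norm. The control relies on the iterated Hardy estimate near the boundary (or near $x_0$), which is straightforward to write down in principle but becomes notationally heavy for general $n$; once the correct polynomial ansatz is fixed, the telescoping estimate used for $n=2$ generalizes verbatim.
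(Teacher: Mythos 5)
Your proposal follows essentially the same route as the paper's own proof: reduce to $v\in H^n_c(0,1)$ by density (proved via a polynomial cutoff as in the fourth-order case), obtain $u\in H^{2n}(K)$ from Lemma \ref{derivatadistribuzionale}, integrate by parts $n$ times on $K$, and pass to the limit $\delta\to 0$ by absolute continuity of the integral. The only quibble is the exponent in your iterated Hardy-type estimate, which should be $2k-1$ rather than $2k+1$ (the basic step gives $(v^{(n-1)})^2(x)\le x\int_0^x (v^{(n)})^2\,dz$), but this is harmless for the convergence argument.
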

	
	\begin{proof}
		The proof is similar to the one of Proposition \ref{Proposition Green's Formula}, so we sketch it. Actually, it is sufficient to prove that, fixed $u\in D(A_n)$ and  defined $\Phi(v):=\int_{0}^{1}u^{(2n)}v\,dx +(-1)^{n+1}\int_0^1 u^{(n)}v^{(n)}dx$, with $v\in H^n_{\frac{1}{a}}(0, 1)$, it follows that
		\begin{equation}\label{Phi}
			\Phi(v)=0
		\end{equation}
		for all  $v\in H^n_{\frac{1}{a}}(0,1)$. As in Proposition \ref{Proposition Green's Formula}, it is sufficient to prove \eqref{Phi}
		for all $v\in H^n_c(0, 1):= \{v \in H^n(0,1): \text{supp}\, v \subset (0,1) \setminus \{x_0\}\}$. The thesis will follow using the density of $H^n_c(0, 1)$ in $H^n_{\frac{1}{a}}(0,1)$, which can be proved as in  Proposition \ref{Proposition Green's Formula}.

		Indeed, assume $x_0 \in (0,1)$, let $\delta >0$ and set $\mathcal K:= [0, x_0-\delta]\cup [x_0+ \delta, 1]$.  By definition of $D(A_n)$, $u^{(2n)}\in L^2(\mathcal  K)$, thus $u^{(n)} \in H^n(\mathcal  K)$ (by Lemma \ref{derivatadistribuzionale} with $p=2$) and, in particular, $u \in H^{2n}(\mathcal  K)$. 
		Observe that, fixed $v\in H^n_c(0, 1)$, one can easily prove that
		\begin{equation*}
			\int_{\mathcal  K}u^{(2n)}v\,dx= (-1)^{n}\int_{\mathcal  K}u^{(n)}v^{(n)}dx.
		\end{equation*}
		Hence
		\begin{equation}\label{1'}
			\begin{split}
				&\int_{0}^{1}u^{(2n)}v\,dx
				=\int_{0}^{x_0-\delta}u^{(2n)}v\,dx+\int_{x_0-\delta}^{x_0+\delta}u^{(2n)}v\,dx+\int_{x_0+\delta}^{1}u^{(2n)}v\,dx \\
				&=(-1)^{n}\int_{0}^{x_0-\delta}u^{(n)}v^{(n)}dx+\int_{x_0-\delta}^{x_0+\delta}u^{(2n)}v\,dx+(-1)^{n}\int_{x_0+\delta}^{1}u^{(n)}v^{(n)}dx.
			\end{split}
		\end{equation}
		As in \eqref{4_1} and in \eqref{4}, one can prove
		\[
		\lim_{\delta\to 0}\int_{0}^{x_0-\delta}u^{(n)}v^{(n)}dx=\int_{0}^{x_0}u^{(n)}v^{(n)}dx,
		\]
		\[
		\lim_{\delta\to 0}\int_{x_0+\delta}^{1}u^{(n)}v^{(n)}dx=\int_{x_0}^{1}u^{(n)}v^{(n)}dx
		\]
		and
		\[
		\lim_{\delta\to 0}\int_{x_0-\delta}^{x_0+\delta}u^{(2n)}v\,dx=0.
		\]
		Thus, by the previous limits and \eqref{1'}, it follows
		\[
		\begin{aligned}
			&	\int_{0}^{1}u^{(2n)}v\,dx = (-1)^{n}\int_{0}^{1}u^{(n)}v^{(n)}dx\,\,\,\,\Longleftrightarrow\\
			&\Phi(v)=\int_{0}^{1}(u^{(2n)}v+ (-1)^{n+1}u^{(n)}v^{(n)})dx=0,
		\end{aligned}
		\]
		for all $v\in H^n_c(0, 1)$. For the rest of the proof, one can proceed as in Proposition \ref{Proposition Green's Formula}. The case $x_0 \in \{0,1\}$ is similar, so we omit it.
	\end{proof}
	
	In the weakly degenerate case, if $x_0 \in (0,1)$, we consider the same domain as before
	\[
	A_nu:=au^{(2n)}, \; \forall \; u \in
	D(A_n):=\biggl \{u\in H^n_{\frac{1}{a}}(0, 1):\, au^{(2n)}\in L^2_{\frac{1}{a}}(0, 1)  \biggr \},
	\]
	and we underline that, as for the case $n=2$, if $au^{(2n)}\in L^2_{\frac{1}{a}}(0, 1)$, then $u\in W^{2n,1}(0,1)$. Thus, the Proposition \ref{Proposition Green's Formula2n} follows immediately.
	
	A natural consequence of the previous proposition is collected in the next theorem, which establishes the generation property in the general case.
	\begin{Theorem}
		If $x_0 \in \{0,1\}$, assume Hypothesis \ref{ipo1}; if $x_0 \in (0,1)$ assume  Hypothesis \ref{hp 3.1} or Hypotheses \ref{hp 3.2} and \ref{hp 3.3}. Then, the operator $\tilde{A_n}:D(A_n)\to L^2_{\frac{1}{a}}(0, 1)$, defined by $\tilde{A_n}u:=(-1)^{n}au^{(2n)}$ for all $u \in D(A_n)$, is self-adjoint and non negative on $L^2_{\frac{1}{a}}(0, 1)$. Hence $-\tilde{A_n}$ generates a contractive analytic semigroup of angle $\displaystyle\frac{\pi}{2}$ on $L^2_{\frac{1}{a}}(0, 1)$ for any $n \in \mathbb{N}$, $n \ge 3$.
	\end{Theorem}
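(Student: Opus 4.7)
The plan is to mimic the proof of Theorem \ref{Theorem 2.1} line by line, replacing Proposition \ref{Proposition Green's Formula} with Proposition \ref{Proposition Green's Formula2n} and tracking the sign $(-1)^n$ everywhere. Since $\mathcal C^\infty_c(0,1)\subset D(A_n)$, the domain is dense in $L^2_{\frac{1}{a}}(0,1)$, so it suffices to show that $\tilde{A_n}$ is symmetric, non negative, and that $I+\tilde{A_n}$ maps $D(A_n)$ onto $L^2_{\frac{1}{a}}(0,1)$. Together, by the usual maximal monotone criterion, these give self-adjointness, and the Stone-von Neumann spectral theorem plus functional calculus then yield the contractive analytic semigroup of angle $\pi/2$, exactly as at the end of Theorem \ref{Theorem 2.1}.

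For symmetry and non negativity, I would apply Proposition \ref{Proposition Green's Formula2n} directly: for $u,v\in D(A_n)$,
\[
\langle \tilde{A_n} u,v\rangle_{L^2_{\frac{1}{a}}(0,1)} = (-1)^n\int_0^1 u^{(2n)}v\,dx = \int_0^1 u^{(n)}v^{(n)}\,dx,
\]
which is symmetric in $u$ and $v$ and, on setting $v=u$, equal to $\|u^{(n)}\|_{L^2(0,1)}^2\ge 0$. The $(-1)^n$ in the definition of $\tilde{A_n}$ is exactly what is needed to balance the factor $(-1)^n$ appearing in Proposition \ref{Proposition Green's Formula2n}, which is why the operator is renormalized in this form.

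For surjectivity of $I+\tilde{A_n}$, I would endow $H^n_{\frac{1}{a}}(0,1)$ with the inner product $\langle u,v\rangle := \int_0^1 \bigl(\tfrac{uv}{a}+u^{(n)}v^{(n)}\bigr)dx$, which by the norm equivalence recalled at the start of Section \ref{Section4} makes it into a Hilbert space, and verify the embeddings $H^n_{\frac{1}{a}}(0,1)\hookrightarrow L^2_{\frac{1}{a}}(0,1)\hookrightarrow \bigl(H^n_{\frac{1}{a}}(0,1)\bigr)^{\!*}$ as in Theorem \ref{Theorem 2.1}. Given $f\in L^2_{\frac{1}{a}}(0,1)$, the Lax-Milgram theorem produces a unique $u\in H^n_{\frac{1}{a}}(0,1)$ with
\[
\int_0^1 u^{(n)}v^{(n)}\,dx = \int_0^1\frac{f-u}{a}\,v\,dx \qquad\forall\,v\in H^n_{\frac{1}{a}}(0,1).
\]
Testing against $v\in\mathcal C^\infty_c(0,1)$ and integrating by parts in the distributional sense identifies $u^{(2n)} = (-1)^n (f-u)/a$ a.e.\ in $(0,1)$. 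Then Lemma \ref{derivatadistribuzionale} applied on compact subsets of $(0,1)\setminus\{x_0\}$ promotes $u$ to $W^{2n,p}_{\text{loc}}\bigl((0,1)\setminus\{x_0\}\bigr)$, and since $(f-u)/a\in L^2_{a}(0,1)$ one concludes $u^{(2n)}\in L^2_{a}(0,1)$, equivalently $au^{(2n)}\in L^2_{\frac{1}{a}}(0,1)$. Therefore $u\in D(A_n)$ and $(I+\tilde{A_n})u=f$.

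The only genuinely new point compared to the fourth-order case is the sign bookkeeping: one must make sure that the factor $(-1)^n$ from the Green's formula combines with the $(-1)^n$ in the definition of $\tilde{A_n}$ so that both the quadratic form $\langle \tilde{A_n} u,u\rangle_{L^2_{\frac{1}{a}}(0,1)}$ and the elliptic identity extracted from Lax-Milgram come out with the correct sign. Once this is set up correctly, every other ingredient, including the distributional regularity step via Lemma \ref{derivatadistribuzionale}, is a verbatim transcription of the proof of Theorem \ref{Theorem 2.1}.
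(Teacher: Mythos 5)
Your proposal is correct and follows the paper's own proof essentially verbatim: density of $D(A_n)$, symmetry and non negativity via the general Green's formula of Proposition \ref{Proposition Green's Formula2n}, surjectivity of $I+\tilde{A_n}$ via Lax--Milgram with the inner product $\int_0^1\bigl(\frac{uv}{a}+u^{(n)}v^{(n)}\bigr)dx$ followed by the distributional identification $(-1)^n u^{(2n)}=\frac{f-u}{a}$ and the regularity step via Lemma \ref{derivatadistribuzionale}, and finally the spectral-theorem conclusion. The $(-1)^n$ bookkeeping you emphasize is exactly how the paper normalizes the operator, so nothing further is needed.
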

	\begin{proof}
		Observe that $D(A_n)$ is dense in $L^2_{\frac{1}{a}}(0, 1)$. It is also clear that
		$\tilde{A_n}$ is symmetric and non negative.
		Indeed,
		
		\underline{$\tilde{A_n}$ is symmetric}: for any $u,v\in D(A_n)$
		\begin{equation*}
			\begin{split}
				\langle \tilde{A_n}u, v \rangle_{L^2_{\frac{1}{a}}(0, 1)}&=\int_{0}^{1}\frac{(-1)^{n}au^{(2n)} v }{a}\,dx=\int_{0}^{1}u^{(n)}v^{(n)}dx=\int_{0}^{1}\frac{(-1)^{n}av^{(2n)} u }{a}\,dx  \\
				&=\langle u, \tilde{A_n}v \rangle_{L^2_{\frac{1}{a}}(0, 1)}.
			\end{split}
		\end{equation*}
		\indent
		\underline{$\tilde{A_n}$ is non negative}: for any $u\in D(A_n)$
		\begin{equation*}
			\langle \tilde{A_n}u, u \rangle_{L^2_{\frac{1}{a}}(0, 1)}=\int_{0}^{1}\frac{(-1)^{n}a u^{(2n)} u}{a}\,dx=\int_{0}^{1}u^{(n)}u^{(n)} dx=\int_{0}^{1}(u^{(n)})^2dx\ge 0.
		\end{equation*}
		\indent
		\underline{$I+\tilde{A_n}$ is surjective:} as in Theorem \ref{Theorem 2.1}, one can prove that for $f\in L^2_{\frac{1}{a}}(0,1)$ there exists a unique $u\in H^n_{\frac{1}{a}}(0, 1)$ such that for all $v\in H^n_{\frac{1}{a}}(0, 1)$:
		\begin{equation}\label{Relation (4.13)}
			\left\langle u, v \right\rangle_{H^n_{\frac{1}{a}}(0, 1)}=\int_{0}^{1} \frac{f v}{a}\,dx,
		\end{equation}
		where
		\begin{equation*}
			\left\langle u, v \right\rangle_{H^n_{\frac{1}{a}}(0, 1)} := \int_{0}^{1} \biggl (\frac{u v}{a}+u^{(n)}v^{(n)} \biggr ) dx\quad\,\,\,\,\,\,\,\forall\; u, v \in H^n_{\frac{1}{a}}(0, 1).
		\end{equation*} 
		More precisely, since $\mathcal{C}^{\infty}_c(0, 1) \subset H^n_{\frac{1}{a}}(0, 1)$, the relation (\ref{Relation (4.13)}) holds for all $v\in \mathcal{C}^{\infty}_c(0, 1)$, i.e.,
		\begin{equation*}
			\int_{0}^{1} u^{(n)}v^{(n)}dx = \int_{0}^{1} \frac{f-u}{a}\, v\,dx\quad\,\,\,\,\,\,\,\forall	\;v\in \mathcal{C}^{\infty}_c(0, 1).
		\end{equation*}
		However, the previous equality is equivalent to
		\begin{equation*}
			(-1)^{n}\int_{0}^{1} u^{(2n)}v\,dx = \int_{0}^{1} \frac{f-u}{a}\,v\,dx\Longleftrightarrow  \int_{0}^{1} \frac{(-1)^{n}au^{(2n)}-f+u}{a}\,v\,dx=0,
		\end{equation*}
		for all $v\in \mathcal{C}^{\infty}_c(0, 1)$.
		Evidently,
		\begin{equation*}
			u\in D(A_n)\,\,\,\,\,\,\text{and}\,\,\,\,\,\,u+(-1)^{n}au^{(2n)}=f,
		\end{equation*}
		i.e.,  $(I+\tilde{A_n})(u)=f$. As a consequence, the operator $(\tilde{A_n},D(A_n))$ generates a cosine family and an analytic semigroup of angle $\displaystyle\frac{\pi}{2}$ on $L^2_{\frac{1}{a}}(0, 1)$.
	\end{proof}
	
	As in Section \ref{SECTION 2}, fixed a natural number $n\ge 3$, $T>0$, $u_0\in L^2_{\frac{1}{a}}(0,1)$ and $f\in L^2(0,T;L^2_{\frac{1}{a}}(0,1))$, we have  that the following problem 
	
	\begin{equation*}
		\begin{cases}
			\ds \frac{\partial u}{\partial t}(t,x)+a(x)\frac{\partial^{2n}u}{\partial x^{2n}}(t,x)=f(t,x), &(t,x)\in (0,T) \times (0,1),\\
			u(t,0)=u(t,1)=0,& t\in (0,T),\\
			\ds \frac{\partial^i u}{\partial x^i}(t,0)=\frac{\partial^i u}{\partial x^i}(t,1)=0,&t\in (0,T),  i=1,...,n-1,\\
			u(0,x)=u_0(x),&x\in(0,1),
		\end{cases}
	\end{equation*}
	has a unique solution as a consequence of the previous theorem.

	Finally, fixed $n\in\mathbb{N}$, $n\ge 3$, it is possible to extend the results of Section \ref{Section 3} in a natural way. More precisely, considering the same assumptions on the function $a$, we have the next general proposition which is useful to characterize  $D(A_n)$ in the weakly degenerate setting.
	\begin{Proposition} Fix $n \in \N$, $n\ge 3$, and assume Hypothesis \ref{hp 3.1}. Then, we have the following properties
		\begin{enumerate}
			\item the spaces $H^n_{\frac{1}{a}}(0, 1)$ and $H^n_0(0, 1)$ coincide algebraically; 
			\item the norms $\|\cdot\|_i$ and $\|\cdot\|_{H^i_{0}(0, 1)}$, $i=1,...,n$, are equivalent;
			\item if $u\in H^n_0(0,1)$, i.e., $u \in H^n_{\frac{1}{a}}(0, 1)$, then
			\begin{equation*}
				(au^{(i)})(x_0)=0,\quad\,\,\,i=0, 1,...,n-1.
			\end{equation*}
		\end{enumerate}
	\end{Proposition}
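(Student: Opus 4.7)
The plan is to mirror the argument of Proposition \ref{proposition 3.1} essentially verbatim, since nothing in that proof uses the specific value $i=2$; the three properties follow from continuous Sobolev embeddings plus $1/a \in L^1(0,1)$.

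For part (1), the inclusion $H^i_{\frac{1}{a}}(0,1) \subseteq H^i_0(0,1)$, $i=1,\dots,n$, holds by definition. For the reverse inclusion, take $u \in H^i_0(0,1)$. Since $i \geq 1$, the standard one-dimensional Sobolev embedding $H^i(0,1) \hookrightarrow \mathcal{C}[0,1]$ gives $u \in \mathcal{C}[0,1]$, hence $u$ is bounded on $[0,1]$. Using $\frac{1}{a} \in L^1(0,1)$ from Hypothesis \ref{hp 3.1}, I would estimate
\[
\int_0^1 \frac{u^2}{a}\,dx \leq \max_{[0,1]} u^2 \int_0^1 \frac{1}{a}\,dx < +\infty,
\]
so $u \in L^2_{\frac{1}{a}}(0,1)$ and therefore $u \in H^i_{\frac{1}{a}}(0,1)$. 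This proves (1) for every $i=1,\dots,n$, in particular for $i=n$ as stated.

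For part (2), the norm equivalence, I would combine the same estimate with the continuous embedding $H^i_0(0,1) \hookrightarrow \mathcal{C}[0,1]$ to get a constant $C>0$ (depending on $i$ and $\|1/a\|_{L^1}$) with
\[
\|u\|^2_{L^2_{\frac{1}{a}}(0,1)} \leq \|u\|^2_{\mathcal{C}[0,1]} \left\|\frac{1}{a}\right\|_{L^1(0,1)} \leq C \|u\|^2_{H^i_0(0,1)},
\]
from which $\|u\|_i \leq (C+1)\|u\|_{H^i_0(0,1)}$. The reverse inequality $\|u\|_{H^i_0(0,1)} \leq C'\|u\|_i$ follows from the equivalence of $\|\cdot\|_{H^i_0(0,1)}$ with $\|u^{(i)}\|_{L^2(0,1)}$ for functions in $H^i_0(0,1)$ (Poincaré iterated) together with the obvious $\|u^{(i)}\|_{L^2(0,1)} \leq \|u\|_i$.

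For part (3), let $u \in H^n_0(0,1)$. The one-dimensional Sobolev embedding yields $u \in \mathcal{C}^{n-1}[0,1]$, so in particular $u^{(i)} \in \mathcal{C}[0,1]$ for every $i=1,\dots,n-1$. Since $a \in \mathcal{C}[0,1]$ and $a(x_0)=0$ by Hypothesis \ref{hp 3.1}, the product $au^{(i)}$ is continuous and
\[
(au^{(i)})(x_0) = a(x_0)\, u^{(i)}(x_0) = 0, \qquad i=1,\dots,n-1.
\]
There is no real obstacle here: everything is a direct generalization of Proposition \ref{proposition 3.1}, and the only ingredient beyond that proof is invoking the higher-order embedding $H^n \hookrightarrow \mathcal{C}^{n-1}$ so that all the intermediate derivatives $u^{(i)}$, $i \leq n-1$, are continuous at the (possibly interior) degeneracy point $x_0$.
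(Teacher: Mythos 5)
Your proof is correct and follows essentially the same route as the paper: parts (1) and (2) repeat the argument of Proposition \ref{proposition 3.1} word for word (the paper itself simply refers back to that proof), and part (3) uses exactly the paper's observation that $u^{(i)}\in\mathcal{C}[0,1]$ for $i\le n-1$ together with $a(x_0)=0$. No discrepancies to report.
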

	\begin{proof} One can prove the first two points as in
		Proposition \ref{proposition 3.1}; thus we omit it. We will prove only the last point. To this purpose, we take $u \in H^n_0(0,1)$. Hence $u, u', u'',...u^{(n-1)} \in \mathcal C[0,1]$  and, using the fact that $a(x_0)=0$, one has $(au^{(i)})(x_0)=0$ for all $i =0, 1,...,n-1$. Hence, the thesis follows.
	\end{proof}
	
	We underline that the previous proposition holds if $x_0 \in [0,1]$.
	
	Consequently, the space $\mathcal{C}^{\infty}_c(0, 1)$ is dense in $H^i_{\frac{1}{a}}(0, 1)$, $i\ge 3$, and, in particular, $D(A_n)$ can be rewritten as 
	\[
	\begin{aligned}
		D(A_n)=\biggl \{u\in H^n_0(0,1):\, & (au^{(i)})(x_0)=0,\, i=0, 1,...,n-1,\; au^{(2n)}\in L^2_{\frac{1}{a}}(0, 1) \biggr \},
	\end{aligned}
	\]
	$n \ge 3$.

	The discussion in the strongly degenerate case is based on the introduction of the space
	\begin{equation*}
		X_n:=\biggl \{u\in H^n_{\frac{1}{a}}(0, 1): u(x_0)=0,\,(au^{(i)})(x_0)=0,\,i=1,...,n-1 \biggr \},\,\,\,n\ge 3.
	\end{equation*}
	Observe that $X_2$ coincides with the space $X$ introduced in the previous section.

	Naturally, if $x_0 \in \{0,1\}$, then 
	\[
	X_n= H^n_{\frac{1}{a}}(0, 1).
	\]
	
	The following result is crucial to characterize the domain of the operator in the strongly degenerate case.
	
	\begin{Proposition} Fix $n \in \N$, $n\ge 3$. Assume Hypotheses \ref{hp 3.2} and \ref{hp 3.3} and $x_0\in(0,1)$. Then
		\begin{equation*}
			H^n_{\frac{1}{a}}(0, 1)=X_n
		\end{equation*}
		and the norms $\Vert u \Vert_{n}^2$ and $\int_{0}^{1}(u^{(n)})^2dx$ are equivalent for all $u \in H^n_{\frac{1}{a}}(0, 1)$.
	\end{Proposition}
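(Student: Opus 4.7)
The strategy is to mirror Proposition \ref{Prop 3.2} and its two-step structure: first establish the set equality $H^n_{\frac{1}{a}}(0,1) = X_n$, then derive the norm equivalence by combining Lemma \ref{lemma norme equivalenti} with an iterated Poincar\'e estimate.

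For the set equality, the inclusion $X_n \subseteq H^n_{\frac{1}{a}}(0,1)$ is immediate from the definition, so I would focus on the reverse. Fix $u\in H^n_{\frac{1}{a}}(0,1) \subseteq H^n_0(0,1)$. The Sobolev embedding $H^n(0,1) \hookrightarrow \mathcal{C}^{n-1}([0,1])$ shows that $u,u',\ldots,u^{(n-1)}$ are continuous on $[0,1]$, and since $a(x_0)=0$, it follows automatically that $(au^{(i)})(x_0)=0$ for $i=1,\ldots,n-1$. The only nontrivial identity is $u(x_0)=0$, which I would obtain exactly as in Proposition \ref{Prop 3.2}: if $u(x_0)=L\neq 0$, then by continuity $|u|\ge |L|/2$ in a neighborhood of $x_0$, so $u^2/a \ge (L^2/4)/a$ there, contradicting $u\in L^2_{\frac{1}{a}}(0,1)$ because $\frac{1}{a}\notin L^1(0,1)$ under Hypothesis \ref{hp 3.2}. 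The case $x_0\in\{0,1\}$ is handled by the convention $X_n=H^n_{\frac{1}{a}}(0,1)$ already recorded in the text, with $u(x_0)=0$ coming directly from $u\in H^n_0(0,1)$.

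For the norm equivalence, the inequality $\|u^{(n)}\|_{L^2(0,1)}^2 \le \|u\|_n^2$ is trivial. For the reverse, the point of Step 1 is that every $u\in H^n_{\frac{1}{a}}(0,1)$ in particular satisfies $u(x_0)=0$ and $(au')(x_0)=0$, so $u\in X$ in the sense of Section \ref{Section 3}. Hence Lemma \ref{lemma norme equivalenti} applies, yielding
\[
\int_0^1 \frac{u^2}{a}\,dx \le C\int_0^1 (u')^2\,dx.
\]
Since $u\in H^n_0(0,1)$, we have $u^{(k)}(0)=u^{(k)}(1)=0$ for $k=0,1,\ldots,n-1$, so an $(n-1)$-fold iteration of Poincar\'e's inequality (applied to each $u^{(k)}$) gives $\|u^{(k)}\|_{L^2(0,1)}^2 \le C\|u^{(n)}\|_{L^2(0,1)}^2$ for $k=0,1,\ldots,n-1$. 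Combining this with the previous display,
\[
\|u\|_{L^2_{\frac{1}{a}}(0,1)}^2 \le C\|u'\|_{L^2(0,1)}^2 \le C\|u^{(n)}\|_{L^2(0,1)}^2,
\]
so $\|u\|_n^2 \le C\|u^{(n)}\|_{L^2(0,1)}^2$, establishing the equivalence.

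There is no serious obstacle: the argument is a cascade on top of the $n=2$ case. The most delicate point is purely bookkeeping, namely noting that Step 1 upgrades an arbitrary $u\in H^n_{\frac{1}{a}}(0,1)$ to an element of $X=X_2$ for free, which is what makes Lemma \ref{lemma norme equivalenti} legitimately available. After that, the proof collapses into standard Poincar\'e iterations, and no further use of Hypothesis \ref{hp 3.3}.2 is needed (only \ref{hp 3.3}.1 is invoked, through Lemma \ref{lemma norme equivalenti}).
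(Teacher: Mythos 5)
Your proposal is correct and follows essentially the same route as the paper: the reverse inclusion is obtained from continuity of $u,\dots,u^{(n-1)}$ together with $a(x_0)=0$ (plus the $\frac1a\notin L^1$ contradiction for $u(x_0)=0$, which the paper gets by citing Proposition \ref{Prop 3.2}), and the norm equivalence comes from Lemma \ref{lemma norme equivalenti} combined with an iterated Poincar\'e-type inequality for $u^{(i)}$ with vanishing boundary values (which the paper phrases via Jensen's inequality). The differences are purely presentational.
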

	\begin{proof}
		Obviously, $X_n \subseteq H^n_{\frac{1}{a}}(0, 1)$. Now, take $u\in H^n_{\frac{1}{a}}(0, 1)$. 
		By Proposition \ref{Prop 3.2}, it holds
		$u(x_0)=(au')(x_0)=0$. Thus, in order to have the thesis, it is sufficient to prove that $(au'')(x_0)=...=(au^{(n-1)})(x_0)=0$.
		To this purpose,
		since $u \in H^n_0(0,1)$, then $u''$,..., $u^{(n-1)}$ and hence $au''$,..., $au^{(n-1)}$ belong to $\mathcal C[0,1]$. Hence, using the fact that $a(x_0)=0$, one has
		\[
		\lim_{x \to x_0} (au^{(i)})(x)=0, \quad i=2,...,n-1.
		\]

		Now, we prove that the two norms are equivalent. To this purpose, take $u \in X_n$. By Lemma \ref{lemma norme equivalenti}, there exists a positive constant $C$ such that
		\begin{equation}\label{dis}
			\Vert u^{(n)} \Vert^2_{L^2(0,1)} \le \Vert u \Vert^2_{n} \le C\bigl ( \Vert u' \Vert^2_{L^2(0,1)}+\Vert u^{(n)} \Vert^2_{L^2(0,1)}\bigr ).
		\end{equation}
		Using the Jensen's inequality and the fact that $u^{(i)}(j)=0$, for all $i=1,..., n-1$ and $j=0,1$, one can prove that
		\[
		\int_0^1 (u^{(i)})^2(x) dx \le \int_0^1 (u^{(i+1)})^2(x) dx, \quad i=1,..., n-1.
		\]
		Thus, by the previous inequality in \eqref{dis}, one can conclude that the two norms are equivalent.
	\end{proof}
	
	Also in this context we underline that, if we know a priori that $u'',...,u^{(n-1)} \in L^2_{\frac{1}{a}}(0,1)$, using the same strategy employed in the previous section, we could prove that $u''(x_0)=...=u^{(n-1)}(x_0)=0$. Hence, the next characterizations hold.
	
	Define 
	\begin{equation*}
		\widetilde H^n_{\frac{1}{a}}(0, 1):= \biggl \{u\in H^n_{\frac{1}{a}}(0, 1): u^{(i)} \in L^2_{\frac{1}{a}}(0,1)\,\,\,\,\,\forall \; i=1,...,n-1 \biggr \}
	\end{equation*}
	and
	\begin{equation*}
		\widetilde X_n:= \biggl \{u\in H^n_{\frac{1}{a}}(0, 1): u^{(i)}(x_0)=0\,\,\,\,\,\forall \; i=0,...,n-1 \biggr \}.
	\end{equation*}

	\begin{Proposition}
		Fix $n \in \N$, $n \ge 3$, and assume Hypotheses \ref{hp 3.2} and \ref{hp 3.3}. Then there exists a positive constant $C$ such that, for all $u \in \widetilde X_n$,
		\begin{equation}\label{disuguaglianze}
			\int_{0}^{1}\frac{(u^{(i)})^2}{a}\,dx \le C \int_{0}^{1}(u^{(i+1)})^2dx
		\end{equation}
		for all $i =2,..., n-1$. Hence
		\begin{equation*}
			\widetilde H^n_{\frac{1}{a}}(0, 1)=\widetilde X_n.
		\end{equation*}
	\end{Proposition}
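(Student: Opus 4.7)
The plan is to adapt the argument of Proposition \ref{Prop 3.2''} by applying it to successive derivatives $w := u^{(i)}$, $i=2,\dots,n-1$, rather than only to $w := u'$ as was done there. For $u\in \widetilde X_n$, each such $w$ lies in $H^1_0(0,1)$ (because $u\in H^n_0(0,1)$ yields $w(0)=w(1)=0$) and also satisfies $w(x_0)=0$ by definition of $\widetilde X_n$. Hence $w$ satisfies exactly the hypotheses used on $u'$ in Proposition \ref{Prop 3.2''}.

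To prove \eqref{disuguaglianze}, I would therefore fix $i\in\{2,\dots,n-1\}$, set $w:=u^{(i)}$, and treat the two cases $x_0\in\{0,1\}$ and $x_0\in(0,1)$ separately. If $x_0\in\{0,1\}$, the inequality reduces to the cited estimate from \cite[Lemma 3.7]{2} applied to $w$. If $x_0\in(0,1)$, I would split
\[
\int_0^1 \frac{w^2}{a}\,dx = \int_0^{x_0-\epsilon}\frac{w^2}{a}\,dx + \int_{x_0-\epsilon}^{x_0+\epsilon}\frac{w^2}{a}\,dx + \int_{x_0+\epsilon}^1 \frac{w^2}{a}\,dx,
\]
bound the outer integrals by $\bigl(\min_{[0,x_0-\epsilon]\cup[x_0+\epsilon,1]} a\bigr)^{-1}\int_0^1 w^2\,dx$, use Hypothesis \ref{hp 3.3}.1 to replace $1/a$ by $K/|x-x_0|^2$ on $[x_0-\epsilon,x_0+\epsilon]$, and invoke Hardy's inequality on each side of $x_0$ (which is legitimate since $w(x_0)=0$). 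Finally Poincar\'e's inequality on $H^1_0(0,1)$ absorbs $\int_0^1 w^2\,dx$ into $\int_0^1 (w')^2\,dx = \int_0^1 (u^{(i+1)})^2\,dx$. This is exactly the chain of estimates carried out in Proposition \ref{Prop 3.2''}, so the argument transfers verbatim.

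For the identity $\widetilde H^n_{\frac{1}{a}}(0,1)=\widetilde X_n$, the inclusion $\widetilde X_n \subseteq \widetilde H^n_{\frac{1}{a}}(0,1)$ follows by combining Proposition \ref{Prop 3.2''} (which handles $i=1$, since any $u\in\widetilde X_n$ lies in $\widetilde X$) with \eqref{disuguaglianze} just established (which handles $i=2,\dots,n-1$, using that $u^{(i+1)}\in L^2(0,1)$ for $u\in H^n_0(0,1)$). For the reverse inclusion, let $u\in \widetilde H^n_{\frac{1}{a}}(0,1)$. Proposition \ref{Prop 3.2} gives $u(x_0)=0$ and Proposition \ref{Prop 3.2'} gives $u'(x_0)=0$. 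For $i=2,\dots,n-1$, the derivative $u^{(i)}$ is continuous on $[0,1]$ (since $u\in H^n(0,1)$) and belongs to $L^2_{\frac{1}{a}}(0,1)$ by assumption, so the very same contradiction argument used in Propositions \ref{Prop 3.2} and \ref{Prop 3.2'}, based on the non-integrability of $1/a$ near $x_0$ (Hypothesis \ref{hp 3.2}), forces $u^{(i)}(x_0)=0$. Thus $u\in \widetilde X_n$.

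I do not expect any genuine obstacle: every ingredient is already available in the paper. The only point requiring a little care is the verification, at the outset, that $w=u^{(i)}$ really satisfies the three boundary conditions $w(0)=w(1)=w(x_0)=0$ needed to run the Hardy--Poincar\'e argument, and the bookkeeping in the converse inclusion to ensure that both the defining constraints of $H^n_{\frac{1}{a}}(0,1)$ (for $i=1,\dots,n-1$) and the vanishing at $x_0$ are read off from membership in $\widetilde H^n_{\frac{1}{a}}(0,1)$ one derivative at a time.
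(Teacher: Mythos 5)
Your proof is correct and follows precisely the route the paper intends: the paper states this proposition without any written proof, remarking only that it follows ``using the same strategy employed in the previous section.'' Your write-up supplies exactly that strategy --- the Hardy--Poincar\'e chain of Proposition \ref{Prop 3.2''} applied to $w=u^{(i)}$ (whose vanishing at $0$, $1$ and $x_0$ you correctly verify) for the inequality \eqref{disuguaglianze}, and the continuity-plus-non-integrability contradiction of Propositions \ref{Prop 3.2} and \ref{Prop 3.2'} for the reverse inclusion.
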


	\begin{Remark}
		If  Hypotheses \ref{hp 3.2} and \ref{hp 3.3} are satisfied and $u\in H^n_{\frac{1}{a}}(0, 1)$ satisfies the relations (\ref{disuguaglianze}), then $u'',...,u^{(n-1)} \in L^2_{\frac{1}{a}}(0,1)$  and again $u''(x_0)=...=u^{(n-1)}(x_0)=0$ by the previous proposition. Clearly, if $u' \in L^2_{\frac{1}{a}}(0,1)$, then $u'(x_0)=0$  by Remark \ref{rem}.
	\end{Remark}
	
	\begin{Proposition} Fix $n \in \N$, $n\ge 3$, and let
		\begin{equation*}
			\begin{split}
			D_n:=\biggl \{u \in H^n_{\frac{1}{a}}(0, 1):&\, au^{(2n)}\in L^2_{\frac{1}{a}}(0, 1), u(x_0)=0,\\ &(au^{(i)})(x_0)=0\,\,\,\,\,\forall \; i=1,...,n \biggr \}.
				\end{split}
		\end{equation*}
		If Hypotheses \ref{hp 3.2}, \ref{hp 3.3} and \ref{hp 3.3.2} are satisfied, then $D(A_n) =D_n$.
	\end{Proposition}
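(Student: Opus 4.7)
The plan is to mimic the structure of the proof of $D(A)=D$ (the $n=2$ case in Section 3), since the essentially new information in $D_n$ beyond what is already encoded in $H^n_{\frac{1}{a}}(0,1)$ is exactly one extra boundary condition at $x_0$. The inclusion $D_n\subseteq D(A_n)$ is immediate by definition, so the work lies in the reverse inclusion. Given $u\in D(A_n)$, the previous proposition (which identifies $H^n_{\frac{1}{a}}(0,1)$ with $X_n$) already provides $u(x_0)=0$ and $(au^{(i)})(x_0)=0$ for $i=1,\dots,n-1$. Hence the only missing relation is $(au^{(n)})(x_0)=0$, and the whole proof reduces to proving this single limit statement.

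To handle $(au^{(n)})(x_0)=0$ from the left, I fix a small $\delta>0$, set $x=x_0-\delta$ and $K=[0,x]$. Since $au^{(2n)}\in L^2_{\frac{1}{a}}(0,1)$ and $a$ is bounded away from zero on $K$, one gets $u^{(2n)}\in L^2(K)$; Lemma \ref{derivatadistribuzionale} then upgrades this to $u\in H^{2n}(K)$, so $au^{(n)}$ is classically differentiable on $K$. I then write
\[
(au^{(n)})(x)-(au^{(n)})(0)=\int_0^x a'u^{(n)}\,dt+\int_0^x au^{(n+1)}\,dt,
\]
and, using Taylor's formula with integral remainder on $K$,
\[
u^{(n+1)}(t)=\sum_{k=0}^{n-2}\frac{u^{(n+1+k)}(0)}{k!}t^k+\frac{1}{(n-2)!}\int_0^t(t-s)^{n-2}u^{(2n)}(s)\,ds,
\]
I split $\int_0^x au^{(n+1)}$ into its polynomial-in-$t$ pieces plus a remainder depending on $u^{(2n)}$.

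The easy terms pass to the limit as $x\to x_0^-$: $\int_0^x a'u^{(n)}dt\to\int_0^{x_0}a'u^{(n)}dt$ by the absolute continuity of the Lebesgue integral (since $a'\in L^\infty$ and $u^{(n)}\in L^1$), and each term $\int_0^x a(t)t^k dt$ converges because $a$ is continuous. The critical remainder term is controlled exactly as in the $n=2$ case: using that $a$ is nonincreasing to the left of $x_0$ (Hypothesis \ref{hp 3.3}.2) gives $\sqrt{a(t)}\le\sqrt{a(s)}$ for $s\le t\le x_0$, so
\[
\Bigl|a(t)\int_0^t(t-s)^{n-2}u^{(2n)}(s)\,ds\Bigr|\le x^{n-2}\sqrt{a(t)}\int_0^t\sqrt{a(s)}\,|u^{(2n)}(s)|\,ds,
\]
and since $\sqrt{a}\,u^{(2n)}\in L^2(0,1)$ (equivalent to $au^{(2n)}\in L^2_{\frac{1}{a}}(0,1)$), Cauchy--Schwarz yields an $L^1(0,x_0)$ majorant independent of $x$. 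Dominated convergence therefore produces a finite limit, so $L:=\lim_{x\to x_0^-}(au^{(n)})(x)$ exists in $\R$. The same argument on $[x_0+\delta,1]$ (using the monotonicity to the right of $x_0$) provides $M:=\lim_{x\to x_0^+}(au^{(n)})(x)\in\R$.

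It remains to show $L=M=0$, which is the standard strong-degeneracy obstruction already used in the proof of $D(A)=D$: if $L\neq 0$, then $|u^{(n)}(x)|\ge C/a(x)$ in a left neighborhood of $x_0$, and since $\frac{1}{a}\notin L^1(0,1)$ by Hypothesis \ref{hp 3.2}, one obtains $u^{(n)}\notin L^1(0,1)$, contradicting $u^{(n)}\in L^2(0,1)\subset L^1(0,1)$. Hence $L=0$, and analogously $M=0$, so $(au^{(n)})(x_0)=0$, completing $D(A_n)\subseteq D_n$. The main technical obstacle I expect is the bookkeeping in step (b)--(c): handling all $n-1$ polynomial Taylor terms together with the integral remainder uniformly in $x$, and making sure the monotonicity hypothesis is applied on the correct side of $x_0$; everything else is a direct iteration of the $n=2$ argument.
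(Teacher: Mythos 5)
Your argument is correct, and it is precisely the extension of the paper's proof of $D(A)=D$ (the $n=2$ case in Section \ref{Section 3}) that the authors leave implicit, since the paper states this proposition without proof: reduce to the single missing condition $(au^{(n)})(x_0)=0$ via the preceding proposition $H^n_{\frac{1}{a}}(0,1)=X_n$, establish existence of the one-sided limits of $au^{(n)}$ at $x_0$ by writing $(au^{(n)})'=a'u^{(n)}+au^{(n+1)}$ and controlling the $au^{(n+1)}$ term through the monotonicity of $a$ and $\sqrt{a}\,u^{(2n)}\in L^2(0,1)$, and then kill the limits using $\frac{1}{a}\notin L^1(0,1)$ against $u^{(n)}\in L^2(0,1)$. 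The only genuinely new ingredient relative to $n=2$ is your Taylor expansion of $u^{(n+1)}$ with integral remainder of order $n-1$ in place of the single primitive $\int_0^t u''''$, and that step is handled correctly (the polynomial terms are harmless since $a$ is continuous, and the remainder admits the uniform $L^1$ majorant $C\sqrt{a(t)}$ exactly as in the paper's estimate).
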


\end{document}